\numberwithin{equation}{section}
\newtheorem{theorem}{Theorem}[section]
\newtheorem{lemma}[theorem]{Lemma}
\newtheorem{definition}[theorem]{Definition}
\newtheorem{proposition}[theorem]{Proposition}
\renewcommand{\epsilon}{{\varepsilon}}
\title[Instability standing waves system NLS quadratic interaction]
{Strong instability of standing waves for a system NLS with quadratic interaction}
\author[V. D. Dinh]{Van Duong Dinh}
\address[V. D. Dinh]{Institut de Math\'ematiques de Toulouse UMR5219, Universit\'e Toulouse CNRS, 31062 Toulouse Cedex 9, France 
and 
Department of Mathematics, HCMC University of Pedagogy, 280 An Duong Vuong, Ho Chi Minh, Vietnam}
\email{dinhvan.duong@math.univ-toulouse.fr}
\subjclass[2010]{35Q44; 35Q55}
\keywords{System NLS quadratic interaction, ground states, instability, blow-up}
\begin{document}
	
	\begin{abstract}
	We study the strong instability of standing waves for a system of nonlinear Schr\"odinger equations with quadratic interaction under the mass resonance condition in dimension $d=5$. 
	\end{abstract}

	\maketitle

	\section{Introduction}
	\label{S:0}
	We consider the system NLS equations
	\begin{equation} \label{Syst-NLS}
		\left\{ 
		\renewcommand*{\arraystretch}{1.3}
		\begin{array}{rcl}
			i\partial_t u + \frac{1}{2m} \Delta u & = & \lambda v \overline{u}, \\
			i\partial_t v + \frac{1}{2M} \Delta v & = & \mu u^2,
		\end{array}
		\right.
	\end{equation}
	where $u, v: \mathbb{R} \times \mathbb{R}^d \rightarrow \mathbb{C}$, $m$ and $M$ are positive constants, $\Delta$ is the Laplacian in $\mathbb{R}^d$ and $\lambda, \mu$ are complex constants. 
	
	The system \eqref{Syst-NLS} is regarded as a non-relativistic limit of the system of nonlinear Klein-Gordon equations
	\[
	\left\{ 
	\renewcommand*{\arraystretch}{1.3}
	\begin{array}{rcl}
	\frac{1}{2c^2m}\partial^2_t u - \frac{1}{2m} \Delta u + \frac{mc^2}{2} u& = & -\lambda v \overline{u}, \\
	\frac{1}{2c^2M}\partial^2_t v - \frac{1}{2M} \Delta v + \frac{Mc^2}{2} v& = & -\mu u^2,
	\end{array}
	\right.
	\]
	under the mass resonance condition
	\begin{align} \label{mas-res}
		M=2m.
	\end{align}
	Indeed, the modulated wave functions $(u_c,v_c):= (e^{itmc^2} u, e^{itMc^2} v)$ satisfy
	\begin{align}\label{klei-gord}
		\left\{ 
		\renewcommand*{\arraystretch}{1.3}
		\begin{array}{rcl}
			\frac{1}{2c^2m} \partial^2_t u_c - i\partial_t u_c - \frac{1}{2m} \Delta u_c &=& - e^{itc^2(2m-M)} \lambda v_c \overline{u}_c,\\
			\frac{1}{2c^2M} \partial^2_t v_c - i\partial_t v_c - \frac{1}{2M} \Delta v_c &=& - e^{itc^2(M-2m)} \mu u^2_c.
		\end{array}
		\right.
	\end{align}
	We see that the phase oscillations on the right hand sides vanish if and only if \eqref{mas-res} holds, and the system \eqref{klei-gord} formally yields \eqref{Syst-NLS} as the speed of light $c$ tends to infinity. The system \eqref{Syst-NLS} also appears in the interaction process for waves propagation in quadratic media (see e.g. \cite{CMS}).
	
	The system \eqref{Syst-NLS} has attracted a lot of interest in past several years. The scattering theory and the asymptotic behavior of solutions have been studied in  \cite{HLN, HLN-modi, HLO, OU}. The Cauchy problem for \eqref{Syst-NLS} in $L^2$, $H^1$ and in the weighted $L^2$ space $\langle x \rangle^{-1} L^2 = \mathcal{F}(H^1)$ under mass resonance condition have been studied in \cite{HOT}. The space-time analytic smoothing effect has been studied in \cite{HO-1, HO-2, Hoshino}. The sharp threshold for scattering and blow-up for \eqref{Syst-NLS} under the mass resonance condition in dimension $d=5$ has been studied in \cite{Hamano}. The existence, stability of standing waves and the characterization of finite time blow-up solutions with minimal mass have been studied recently in \cite{Dinh}.
	
	Let us recall the local well-posedness in $H^1$ for \eqref{Syst-NLS} due to \cite{HOT}. To ensure the conservation law of total charge, it is natural to consider the following condition:
	\begin{align} \label{mas-con}
		\exists ~ c \in \mathbb{R} \backslash \{0\} \ : \ \lambda = c \overline{\mu}. 
	\end{align} 
	\begin{proposition}[LWP in $H^1$ \cite{HOT}] \label{prop-lwp-h1}
		Let $d\leq 6$ and let $\lambda$ and $\mu$ satisfy \eqref{mas-con}. Then for any $(u_0,v_0) \in H^1\times H^1$, there exists a unique paire of local solutions $(u,v) \in Y(I)\times Y(I)$ of \eqref{Syst-NLS} with initial data $(u(0), v(0))=(u_0,v_0)$, where
		\begin{align*}
			Y(I) = (C\cap L^\infty)(I,H^1) \cap L^4(I,W^{1,\infty}) &\text{ for } d=1, \\
			Y(I) = (C\cap L^\infty)(I,H^1) \cap L^{q_0}(I,W^{1,{r_0}}) &\text{ for } d=2, 
		\end{align*}
		where $0<\frac{2}{q_0}=1-\frac{2}{r_0}<1$ with $r_0$ sufficiently large, 
		\begin{align*}
			Y(I) = (C\cap L^\infty)(I, H^1) \cap L^2(I, W^{1,\frac{2d}{d-2}}) &\text{ for } d\geq 3.
		\end{align*}
		Moreover, the solution satisfies the conservation of mass and energy: for all $t\in I$,
		\begin{align*}
		M(u(t),v(t))&:= \|u(t)\|^2_{L^2}+ c\|v(t)\|^2_{L^2} = M(u_0,v_0), \\
		E(u(t),v(t))&:= \frac{1}{2m}\|\nabla u(t)\|^2_{L^2} + \frac{c}{4M} \|\nabla v(t)\|^2_{L^2} + \emph{Re} (\lambda \langle v(t), u^2(t) \rangle ) = E(u_0,v_0),
		\end{align*}
		where $\langle \cdot, \cdot \rangle$ is the scalar product in $L^2$. 
	\end{proposition}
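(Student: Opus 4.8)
The statement is a standard semilinear local well-posedness result, and I would establish it by a fixed-point argument in $Y(I)\times Y(I)$ based on Strichartz estimates. First I would recast \eqref{Syst-NLS} in Duhamel form,
\begin{align*}
u(t) &= e^{\frac{it}{2m}\Delta}u_0 - i\lambda\int_0^t e^{\frac{i(t-s)}{2m}\Delta}(v\overline{u})(s)\,ds, \\
v(t) &= e^{\frac{it}{2M}\Delta}v_0 - i\mu\int_0^t e^{\frac{i(t-s)}{2M}\Delta}(u^2)(s)\,ds,
\end{align*}
and view the right-hand side as a map $\Phi=(\Phi_1,\Phi_2)$ whose fixed point is the desired solution. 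Since $e^{\frac{it}{2m}\Delta}$ differs from the standard Schr\"odinger group only by the harmless rescaling $t\mapsto t/(2m)$, the full family of Strichartz estimates --- homogeneous, inhomogeneous, and, for $d\geq 3$, the endpoint estimate at the admissible pair $(2,\frac{2d}{d-2})$ --- applies to each component. This is precisely why $Y(I)$ is built from $L^\infty(I,H^1)$ together with the endpoint space $L^2(I,W^{1,\frac{2d}{d-2}})$, with the analogous admissible pairs $(4,\infty)$ and $(q_0,r_0)$ chosen in $d=1,2$.

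The heart of the matter is the nonlinear estimate. Because the coupling is \emph{quadratic}, after applying one derivative I must control products such as $(\nabla v)\overline{u}$ and $u\nabla u$ in the dual endpoint norm $L^2(I,L^{\frac{2d}{d+2}})$. By H\"older in space,
\[
\|(\nabla v)\overline{u}\|_{L^{\frac{2d}{d+2}}}\leq \|\nabla v\|_{L^{\frac{2d}{d-2}}}\,\|u\|_{L^{d/2}},
\]
and the second factor is controlled by $\|u\|_{H^1}$ through the Sobolev embedding $H^1(\mathbb{R}^d)\hookrightarrow L^{d/2}(\mathbb{R}^d)$, valid for $4\leq d\leq 6$; the binding upper constraint $\tfrac{d}{2}\leq\tfrac{2d}{d-2}$, i.e. $d\leq 6$, is the origin of the dimensional restriction, while the low dimensions $d=1,2,3$ are handled by the admissible pairs singled out in the statement. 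Placing $\nabla v$ in the endpoint norm $L^2(I,L^{\frac{2d}{d-2}})$ and $u$ in $L^\infty(I,H^1)$, and treating the term $u\nabla u$ arising from $u^2$ identically, I obtain
\[
\|\nabla(v\overline{u})\|_{L^2(I,L^{\frac{2d}{d+2}})}+\|\nabla(u^2)\|_{L^2(I,L^{\frac{2d}{d+2}})}\lesssim \big(\|u\|_{Y(I)}+\|v\|_{Y(I)}\big)^2,
\]
with matching Lipschitz bounds for differences.

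I would then run the contraction on the set of $(u,v)$ whose $L^\infty(I,H^1)$ norms are bounded by $R\sim \|(u_0,v_0)\|_{H^1\times H^1}$ and whose endpoint norms $\|u\|_{L^2(I,W^{1,\frac{2d}{d-2}})}$, $\|v\|_{L^2(I,W^{1,\frac{2d}{d-2}})}$ are bounded by a small $\delta$. The crucial point is that the free-evolution endpoint norm $\|e^{\frac{it}{2m}\Delta}u_0\|_{L^2(I,W^{1,\frac{2d}{d-2}})}$ tends to $0$ as $|I|\to 0$ by absolute continuity of the integral; since each nonlinear estimate above carries at least one factor measured in this small endpoint norm, the Lipschitz constant of $\Phi$ is $O(\delta)$, yielding a contraction on a sufficiently short interval. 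Uniqueness and continuous dependence follow from the same estimates.

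Finally, the conservation laws I would obtain by first verifying them for smooth solutions by direct differentiation: the charge condition \eqref{mas-con}, $\lambda=c\overline{\mu}$, is exactly what forces the cross terms in $\frac{d}{dt}\big(\|u\|_{L^2}^2+c\|v\|_{L^2}^2\big)$ to cancel, and a similar algebraic cancellation, again using \eqref{mas-con}, gives $\frac{d}{dt}E=0$. I would then upgrade these identities to general $H^1$ data by approximation, invoking the continuous dependence established above. The main obstacle I anticipate is purely technical: bookkeeping the endpoint Strichartz estimate together with the H\"older/Sobolev chain so that every product closes in $H^1$ up to $d=6$, and justifying the conservation laws at the $H^1$ regularity level rather than assuming the extra smoothness used in the formal computation.
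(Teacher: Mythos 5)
Your proposal is correct and is essentially the same argument as the paper's source: the paper does not prove Proposition \ref{prop-lwp-h1} itself but imports it from \cite{HOT}, where it is established precisely by the Duhamel/Strichartz fixed-point scheme you describe, with the endpoint pair $\left(2,\tfrac{2d}{d-2}\right)$ and the smallness of the free evolution's endpoint norm on short intervals handling the energy-critical case $d=6$, and the conservation laws justified at $H^1$ regularity by approximation. The one point to tighten in a full writeup is your claim of an $O(\delta)$ Lipschitz constant at $d=6$: in the difference estimate, when the derivative falls on the difference (e.g. $\nabla(v_1-v_2)\,\overline{u}_1$) you should place the non-difference factor in the small norm $L^2(I,L^6)$ via the Sobolev embedding $W^{1,3}(\mathbb{R}^6)\hookrightarrow L^6(\mathbb{R}^6)$ and measure the difference in $L^\infty(I,H^1)$ together with level-zero Strichartz norms, which is the standard Cazenave--Weissler implementation of exactly the contraction you outline.
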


	We now assume that $\lambda$ and $\mu$ satisfy \eqref{mas-con} with $c>0$ and $\lambda \ne 0, \mu \ne 0$. By change of variables
	\[
	u(t,x) \mapsto \sqrt{\frac{c}{2}} |\mu| u \left(t,\sqrt{\frac{1}{2m}} x \right), \quad v(t,x) \mapsto -\frac{\lambda}{2} v\left( t, \sqrt{\frac{1}{2m}} x\right),
	\]
	the system \eqref{Syst-NLS} becomes 
	\begin{equation} \label{cha-Syst}
		\left\{ 
		\renewcommand*{\arraystretch}{1.3}
		\begin{array}{rcl}
			i\partial_t u + \Delta u & = & -2 v \overline{u}, \\
			i\partial_t v + \kappa \Delta v & = & - u^2,
		\end{array}
		\right.
	\end{equation}
	where $\kappa=\frac{m}{M}$ is the mass ratio. Note that the mass and the energy now become
	\begin{align*}
	M(u(t),v(t)) &= \|u(t)\|^2_{L^2} + 2 \|v(t)\|^2_{L^2}, \\
	E(u(t),v(t)) &= \frac{1}{2} (\|\nabla u(t)\|^2_{L^2} + \kappa \|\nabla v(t)\|^2_{L^2} ) - \text{Re}( \langle v(t), u^2(t)\rangle).
	\end{align*}
	The local well-posedness in $H^1$ for \eqref{cha-Syst} reads as follows.
	\begin{proposition} [LWP in $H^1$] \label{prop-lwp-wor}
		Let $d\leq 6$. Then for any $(u_0, v_0) \in H^1 \times H^1$, there exists a unique pair of local solutions $(u,v) \in Y(I) \times Y(I)$ of \eqref{cha-Syst} with initial data $(u(0), v(0))=(u_0,v_0)$. Moreover, the solution satisfies the conservation of mass and energy: for all $t \in I$,
		\begin{align*}
		M(u(t),v(t)) &:= \|u(t)\|^2_{L^2} + 2 \|v(t)\|^2_{L^2} = M(u_0,v_0), \\
		E(u(t),v(t)) &:= \frac{1}{2} (\|\nabla u(t)\|^2_{L^2} + \kappa \|\nabla v(t)\|^2_{L^2}) - \emph{Re} (\langle v(t),u^2(t)\rangle) = E(u_0,v_0).
		\end{align*}
	\end{proposition}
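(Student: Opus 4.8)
The plan is to deduce Proposition~\ref{prop-lwp-wor} from Proposition~\ref{prop-lwp-h1} by transporting the local theory through the explicit change of variables that turns \eqref{Syst-NLS} into \eqref{cha-Syst}. The substitution displayed just before \eqref{cha-Syst},
\[
u(t,x)=\sqrt{\tfrac{c}{2}}\,|\mu|\,\tilde u\!\left(t,\tfrac{x}{\sqrt{2m}}\right),\qquad v(t,x)=-\tfrac{\lambda}{2}\,\tilde v\!\left(t,\tfrac{x}{\sqrt{2m}}\right),
\]
sends a solution $(\tilde u,\tilde v)$ of \eqref{Syst-NLS}, with $\lambda,\mu$ obeying \eqref{mas-con} and $c>0$, to a candidate $(u,v)$ for \eqref{cha-Syst}; its inverse $\tilde u(t,x)=\tfrac{1}{\sqrt{c/2}\,|\mu|}\,u(t,\sqrt{2m}\,x)$, $\tilde v(t,x)=-\tfrac{2}{\lambda}\,v(t,\sqrt{2m}\,x)$ goes back. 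Each map is the composition of a fixed spatial dilation with multiplication by nonzero constants, hence they are mutually inverse linear isomorphisms of $H^1\times H^1$ onto itself, and likewise of $Y(I)\times Y(I)$ onto itself, with equivalent norms.

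The verification that this substitution intertwines the two flows is a direct computation. Writing $u=a\,\tilde u(t,bx)$ and $v=e\,\tilde v(t,bx)$, matching the Laplacian coefficients forces $b^2=1/(2m)$, so that $\tfrac{1}{2mb^2}=1$ and $\tfrac{1}{2Mb^2}=m/M=\kappa$, while matching the quadratic terms forces $\tfrac{\lambda a}{e\bar a}=-2$ and $\tfrac{e\mu}{a^2}=-1$. Solving the latter pair gives $|a|^2=\lambda\mu/2$ and $e=-a^2/\mu$; here the hypothesis $c>0$ enters crucially, for \eqref{mas-con} yields $\lambda\mu=c|\mu|^2>0$, so that the real amplitude $a=\sqrt{c/2}\,|\mu|$ exists and then $e=-\lambda/2$, exactly the constants above.

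With this dictionary in hand, existence and uniqueness of $(u,v)\in Y(I)\times Y(I)$ for \eqref{cha-Syst} follow at once from the corresponding assertion of Proposition~\ref{prop-lwp-h1}: one maps the data $(u_0,v_0)$ to admissible data for \eqref{Syst-NLS}, solves there, and transports the solution back through the isomorphism. The class $Y(I)$ is preserved because the fixed dilation only rescales the $L^r_x$ and $W^{1,r}_x$ norms by powers of $\sqrt{2m}$ and leaves the time integrability untouched. Finally, substituting the dictionary into the conserved quantities $M(\tilde u,\tilde v)$ and $E(\tilde u,\tilde v)$ of Proposition~\ref{prop-lwp-h1}, a short computation shows that each transforms, up to its own positive multiplicative constant, into $\|u\|_{L^2}^2+2\|v\|_{L^2}^2$ and $\tfrac12\big(\|\nabla u\|_{L^2}^2+\kappa\|\nabla v\|_{L^2}^2\big)-\mathrm{Re}\langle v,u^2\rangle$, respectively; the reality of $\lambda\mu$ guaranteed by \eqref{mas-con} is exactly what makes the phases in the potential term $\mathrm{Re}(\lambda\langle\tilde v,\tilde u^2\rangle)$ collapse to a positive multiple of $-\mathrm{Re}\langle v,u^2\rangle$.

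Because the argument is a pure transfer through an invertible change of variables, I do not expect a genuine analytic obstacle beyond Proposition~\ref{prop-lwp-h1} itself; the only delicate points are the bookkeeping of the scaling constants and checking that the mixed space--time norms defining $Y(I)$ are dilation invariant up to equivalence. Were one instead to seek a self-contained proof, the main difficulty would shift to establishing Strichartz estimates for the two Schr\"odinger groups $e^{it\Delta}$ and $e^{it\kappa\Delta}$ with distinct dispersion coefficients and then closing a contraction argument in $Y(I)\times Y(I)$ for the quadratic nonlinearities $v\overline u$ and $u^2$ at the $H^1$ level --- which is precisely the content already packaged in Proposition~\ref{prop-lwp-h1}.
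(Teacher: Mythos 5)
Your proposal is correct and matches the paper's (implicit) argument: the paper states Proposition~\ref{prop-lwp-wor} immediately after displaying the change of variables, precisely because it is the transfer of Proposition~\ref{prop-lwp-h1} through that invertible substitution, and your bookkeeping of the constants ($b^2=1/(2m)$ giving coefficients $1$ and $\kappa$, $a=\sqrt{c/2}\,|\mu|$, $e=-\lambda/2$, with $\lambda\mu=c|\mu|^2>0$ making $a$ real) checks out. One could even note that \eqref{cha-Syst} is itself of the form \eqref{Syst-NLS} with $m=\tfrac12$, $M=\tfrac{1}{2\kappa}$, $\lambda=-2$, $\mu=-1$, $c=2$, so the proposition is literally a special case of Proposition~\ref{prop-lwp-h1}, but this is the same observation in slightly different clothing.
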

	
	The main purpose of this paper is to study the strong instability of standing waves for the system \eqref{cha-Syst} under the mass resonance condition $\kappa=\frac{1}{2}$ in dimension $d=5$. Let $d=5$ and consider
	\begin{equation} \label{mas-res-Syst}
	\left\{ 
	\renewcommand*{\arraystretch}{1.3}
	\begin{array}{rcl}
	i\partial_t u + \Delta u & = & -2 v \overline{u}, \\
	i\partial_t v + \frac{1}{2} \Delta v & = & - u^2,
	\end{array}
	\right.
	\end{equation}
	We call a standing wave a solution to \eqref{mas-res-Syst} of the form $(e^{i\omega t} \phi_\omega, e^{i2\omega t} \psi_\omega)$, where $\omega \in \mathbb{R}$ is a frequency and $(\phi_\omega, \psi_\omega) \in H^1 \times H^1$ is a non-trivial solution to the elliptic system
	\begin{equation} \label{ell-equ}
	\left\{ 
	\renewcommand*{\arraystretch}{1.3}
	\begin{array}{rcl} -\Delta \phi_\omega + \omega \phi_\omega & = & 2 \psi_\omega \overline{\phi}_\omega, \\ -\frac{1}{2} \Delta \psi_\omega + 2\omega \psi_\omega & = & \phi_\omega^2.\end{array}
	\right.
	\end{equation}
	We are interested in showing the strong instability of ground state standing waves for \eqref{mas-res-Syst}. Let us first introduce the notion of ground states related to \eqref{mas-res-Syst}. Denote
	\[
	S_\omega(u,v):= E(u,v) + \frac{\omega}{2} M(u,v) = \frac{1}{2} K(u,v) + \frac{\omega}{2} M(u,v) - P(u,v),
	\]
	where
	\[
	K(u,v) = \|\nabla u\|^2_{L^2} + \frac{1}{2} \|\nabla v\|^2_{L^2}, \quad M(u,v) = \|u\|^2_{L^2} + 2 \|v\|^2_{L^2}, \quad P(u,v) = \text{Re} \int \overline{v} u^2 dx.
	\]
	We also denote the set of non-trivial solutions of \eqref{ell-equ} by
	\[
	\mathcal{A}_\omega:= \{ (u,v) \in H^1 \times H^1 \backslash \{(0,0)\} \ : \ S'_\omega(u,v) =0 \}.
	\]
	\begin{definition} \label{def-gro-sta-ins}
		A pair of functions $(\phi,\psi) \in H^1 \times H^1$ is called ground state for \eqref{ell-equ} if it is a minimizer of $S_\omega$ over the set $\mathcal{A}_\omega$. The set of ground states is denoted by $\mathcal{G}_\omega$. In particular,
		\[
		\mathcal{G}_\omega= \{(\phi,\psi) \in \mathcal{A}_\omega \ : \ S_\omega(\phi,\psi) \leq S_\omega(u,v), \forall (u,v) \in \mathcal{A}_\omega \}.
		\]
	\end{definition}
	We have the following result on the existence of ground states for \eqref{ell-equ}.
	\begin{proposition} \label{prop-exi-gro-sta-ins}
		Let $d=5$, $\kappa = \frac{1}{2}$ and $\omega>0$. Then the set $\mathcal{G}_\omega$ is not empty, and it is characterized by
		\[
		\mathcal{G}_\omega = \{ (u,v) \in H^1 \times H^1 \backslash \{(0,0)\} \ : \ S_\omega(u,v) = d(\omega), K_\omega(u,v) =0 \},
		\]
		where 
		\[
		K_\omega(u,v) = \left. \partial_\gamma S_\omega(\gamma u, \gamma v) \right|_{\gamma=1} = K(u,v) + \omega M(u,v) - 3 P(u,v)
		\]
		is the Nehari functional and
		\begin{align} \label{d-ome}
		d(\omega) := \inf \{ S_\omega(u,v) \ : \ (u,v) \in H^1 \times H^1 \backslash \{(0,0)\}, K_\omega(u,v) =0 \}. 
		\end{align}
	\end{proposition}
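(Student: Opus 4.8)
The plan is to turn the constrained minimization defining $d(\omega)$ into a scaling-invariant quotient and solve the latter by a symmetrization argument. First I would record the two algebraic identities that drive everything. On the constraint set $\{K_\omega(u,v)=0\}$ one has $K(u,v)+\omega M(u,v)=3P(u,v)$, and since
\[
S_\omega(u,v)-\tfrac13 K_\omega(u,v)=\tfrac16\bigl(K(u,v)+\omega M(u,v)\bigr),
\]
it follows that $S_\omega$ reduces on the Nehari manifold to $\tfrac16(K+\omega M)$, which for $\omega>0$ is equivalent to (a constant times) the squared $H^1\times H^1$ norm. Next, for any $(u,v)$ with $P(u,v)>0$ the dilation $(\gamma u,\gamma v)$ with $\gamma=(K+\omega M)/(3P)$ lands on the Nehari manifold and satisfies $S_\omega(\gamma u,\gamma v)=(K+\omega M)^3/(54\,P^2)$. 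Hence
\[
d(\omega)=\frac{1}{54}\,\inf\Bigl\{\frac{\bigl(K(u,v)+\omega M(u,v)\bigr)^3}{P(u,v)^2}\ :\ P(u,v)>0\Bigr\}.
\]
Positivity of $d(\omega)$ is then immediate from the Gagliardo--Nirenberg inequality $P(u,v)\le \|v\|_{L^3}\|u\|_{L^3}^2\le C\,(K+\omega M)^{3/2}$, valid because $H^1(\mathbb{R}^5)\hookrightarrow L^3$ in dimension $d=5$; this forces the quotient to be bounded below by $C^{-2}>0$, so $d(\omega)>0$.

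For existence I would take a minimizing sequence $(u_n,v_n)$ for the scaling-invariant quotient and reduce it to radial, nonnegative functions by symmetrization. Passing first to $(|u_n|,|v_n|)$ only increases $P$ (since $\operatorname{Re}\int\overline{v}\,u^2\le\int|v|\,|u|^2$) while leaving $K$ and $M$ unchanged; replacing these by their Schwarz symmetric-decreasing rearrangements further increases $\int |v|\,|u|^2$ by the Riesz rearrangement inequality, preserves $M$, and does not increase $K$ by the Pólya--Szegő inequality. Thus the quotient can only decrease, and I may assume $(u_n,v_n)$ is radial. Normalizing the scale so that $K(u_n,v_n)+\omega M(u_n,v_n)=1$, the sequence is bounded in $H^1_{\mathrm{rad}}\times H^1_{\mathrm{rad}}$, and the compact radial embedding $H^1_{\mathrm{rad}}(\mathbb{R}^5)\hookrightarrow L^3(\mathbb{R}^5)$ yields strong $L^3$ convergence of a subsequence to a limit $(\phi,\psi)$. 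Strong $L^3$ convergence gives $P(u_n,v_n)\to P(\phi,\psi)$, which stays bounded away from $0$ along the minimizing sequence, so $(\phi,\psi)\neq(0,0)$; weak lower semicontinuity of $K+\omega M$ then shows $(\phi,\psi)$ realizes the infimum. After rescaling back onto the Nehari manifold I obtain $(\phi,\psi)$ with $K_\omega(\phi,\psi)=0$ and $S_\omega(\phi,\psi)=d(\omega)$.

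It remains to upgrade this constrained minimizer to an element of $\mathcal{A}_\omega$ and to deduce the characterization. By Lagrange multipliers there is $\Lambda$ with $S'_\omega(\phi,\psi)=\Lambda\,K'_\omega(\phi,\psi)$. Pairing with $(\phi,\psi)$ gives $\langle S'_\omega(\phi,\psi),(\phi,\psi)\rangle=K_\omega(\phi,\psi)=0$, while a direct computation of $\partial_\gamma K_\omega(\gamma\phi,\gamma\psi)|_{\gamma=1}=2(K+\omega M)-9P$ evaluated on the manifold (where $K+\omega M=3P$) gives $\langle K'_\omega(\phi,\psi),(\phi,\psi)\rangle=-(K+\omega M)<0$. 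Hence $\Lambda=0$, so $S'_\omega(\phi,\psi)=0$, i.e. $(\phi,\psi)\in\mathcal{A}_\omega$ and $\mathcal{G}_\omega\neq\emptyset$. For the characterization, every $(u,v)\in\mathcal{A}_\omega$ satisfies $K_\omega(u,v)=\langle S'_\omega(u,v),(u,v)\rangle=0$, so $\mathcal{A}_\omega\subseteq\{K_\omega=0\}$ and therefore $\inf_{\mathcal{A}_\omega}S_\omega\ge d(\omega)$; the minimizer just constructed attains $d(\omega)$ inside $\mathcal{A}_\omega$, so $\min_{\mathcal{A}_\omega}S_\omega=d(\omega)$. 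Conversely, any nontrivial $(u,v)$ with $K_\omega(u,v)=0$ and $S_\omega(u,v)=d(\omega)$ is a constrained minimizer, hence lies in $\mathcal{A}_\omega$ by the same Lagrange argument, and being of minimal action it is a ground state. This gives the claimed identity for $\mathcal{G}_\omega$.

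The main obstacle is the existence step: the embedding $H^1(\mathbb{R}^5)\hookrightarrow L^3(\mathbb{R}^5)$ is not compact, so a naive minimizing sequence may vanish or split. I expect the decisive point to be justifying that symmetrization behaves well with respect to the trilinear coupling $P(u,v)=\operatorname{Re}\int\overline{v}\,u^2\,dx$ (via passage to moduli followed by Riesz rearrangement) so that the problem can be posed on the radial class, where the compact Strauss embedding restores the needed compactness; the Lagrange-multiplier and characterization steps are then routine given the sign of $\langle K'_\omega(\phi,\psi),(\phi,\psi)\rangle$.
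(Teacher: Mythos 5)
Your proposal is correct, but it reaches the crucial existence step by a genuinely different route than the paper. The paper (Lemmas \ref{lem-pos-d-ome} and \ref{lem-no-emp-M-ome}) keeps the minimizing sequence on the Nehari manifold $\{K_\omega=0\}$ and restores compactness with the profile decomposition of Proposition \ref{prop-pro-dec-5D}: the nonvanishing of $P$ along the sequence forces exactly one nonzero profile, which is then shown to be a minimizer, with no symmetry or sign restriction ever imposed. You instead rewrite $d(\omega)$ as the scaling-invariant quotient $H_\omega^3/(54P^2)$ (your constant and reduction are correct: with $\gamma=H_\omega/(3P)$ one lands on the Nehari manifold and $S_\omega(\gamma u,\gamma v)=H_\omega^3/(54P^2)$) and symmetrize, then use the compact embedding $H^1_{\mathrm{rad}}(\mathbb{R}^5)\hookrightarrow L^3$, legitimate since $2<3<\frac{10}{3}$. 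Your route is more elementary, avoiding the profile decomposition machinery entirely, at the price of (i) working in the radial decreasing class --- harmless here, because the Lagrange-multiplier and double-inclusion arguments identifying the minimizing set with $\mathcal{G}_\omega$ (your last paragraph, which coincides with the paper's Lemmas \ref{lem-inc-M-G} and \ref{lem-inc-G-M}, including the key sign $\langle K_\omega'(\phi,\psi),(\phi,\psi)\rangle=-3P(\phi,\psi)<0$) are abstract and apply to arbitrary complex-valued pairs once one minimizer exists --- and (ii) needing the rearrangement inequalities to cooperate with the trilinear coupling, which they do. Two small inaccuracies you should repair: passing to $(|u_n|,|v_n|)$ does not leave $K$ ``unchanged''; rather $\|\nabla|u|\|_{L^2}\le\|\nabla u\|_{L^2}$, with possible strict inequality for complex $u$, which is the favorable direction anyway. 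And the inequality $\int |v|\,|u|^2\,dx\le\int |v|^*\bigl(|u|^*\bigr)^2\,dx$ for three functions evaluated at the same point is the generalized Hardy--Littlewood rearrangement inequality, provable by the layer-cake formula together with $|A\cap B\cap C|\le\min(|A|,|B|,|C|)=|A^*\cap B^*\cap C^*|$ for centered balls; Riesz's rearrangement inequality proper concerns convolution-type integrals $\iint f(x)g(x-y)h(y)\,dx\,dy$ and is not what you are using. Neither slip affects the validity of the argument.
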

	The existence of real-valued ground states for \eqref{ell-equ} was proved in \cite{HOT} (actually for $d\leq 5$ and $\kappa>0$). Here we proved the existence of ground states (not necessary real-valued) and proved its characterization. This characterization plays an important role in the study of strong instability of ground states standing waves for \eqref{mas-res-Syst}. We only state and prove Proposition $\ref{prop-exi-gro-sta-ins}$ for $d=5$ and $\kappa=\frac{1}{2}$. However, it is still available for $d\leq 5$ and $\kappa>0$.
	
	We also recall the definition of the strong instability of standing waves. 
	\begin{definition} \label{def-str-ins}
		We say that the standing wave $(e^{i\omega t} \phi_\omega, e^{i2\omega t} \psi_\omega)$ is strongly unstable if for any $\epsilon>0$, there exists $(u_0,v_0) \in H^1 \times H^1$ such that $\|(u_0,v_0) - (\phi_\omega, \psi_\omega)\|_{H^1 \times H^1} <\epsilon$ and the corresponding solution $(u(t),v(t))$ to \eqref{mas-res-Syst} with initial data $(u(0), v(0))=(u_0,v_0)$ blows up in finite time.
	\end{definition}
	Our main result of this paper is the following.
	\begin{theorem} \label{theo-str-ins}
		Let $d=5$, $\kappa=\frac{1}{2}$, $\omega>0$ and $(\phi_\omega, \psi_\omega) \in \mathcal{G}_\omega$. Then the ground state standing waves $(e^{i\omega t} \phi_\omega, e^{i2\omega t} \psi_\omega)$ for \eqref{mas-res-Syst} is strongly unstable.
	\end{theorem}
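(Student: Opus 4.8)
The plan is to combine a Pohozaev-type variational characterization of the ground-state level $d(\omega)$ with a virial/convexity argument, in the spirit of Berestycki--Cazenave and its refinement by Ohta. Throughout I would work with the mass-preserving dilation $(u^\lambda,v^\lambda):=(\lambda^{5/2}u(\lambda\,\cdot),\lambda^{5/2}v(\lambda\,\cdot))$, under which $K(u^\lambda,v^\lambda)=\lambda^2 K(u,v)$, $M(u^\lambda,v^\lambda)=M(u,v)$ and $P(u^\lambda,v^\lambda)=\lambda^{5/2}P(u,v)$, and introduce the virial functional
\[
Q(u,v):=\left.\partial_\lambda S_\omega(u^\lambda,v^\lambda)\right|_{\lambda=1}=K(u,v)-\tfrac52 P(u,v).
\]
First I would record two consequences of $(\phi_\omega,\psi_\omega)\in\mathcal G_\omega$: pairing $S'_\omega(\phi_\omega,\psi_\omega)=0$ with the dilation generator $\Lambda:=\left.\partial_\lambda(u^\lambda,v^\lambda)\right|_{\lambda=1}$ gives the Pohozaev identity $Q(\phi_\omega,\psi_\omega)=0$, while $\omega>0$ forces exponential decay of the ground state, so it has finite variance $\int|x|^2(|\phi_\omega|^2+2|\psi_\omega|^2)\,dx<\infty$.

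The analytic engine is the virial identity: for data of finite variance (a class preserved by the flow, by the weighted Cauchy theory of \cite{HOT}), the quantity $V(t):=\int|x|^2(|u(t)|^2+2|v(t)|^2)\,dx$ obeys $V''(t)=8\,Q(u(t),v(t))$. Here the mass-resonance normalization $\kappa=\tfrac12$, together with the weight $2$ on $|v|^2$, is precisely what makes the coupling terms cancel so that $V''$ closes in terms of $K$ and $P$ alone; checking this cancellation is a direct but essential computation.

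The variational heart of the argument is the identity $d(\omega)=\inf\{S_\omega(u,v): (u,v)\neq(0,0),\ Q(u,v)=0\}$. The inequality $\le$ is immediate from $Q(\phi_\omega,\psi_\omega)=0$. For $\ge$ I would minimize $S_\omega$ under the constraint $Q=0$ and show a minimizer solves the elliptic system: the Lagrange relation $S'_\omega=\theta\,Q'$, paired with $\Lambda$ and using $\langle S'_\omega,\Lambda\rangle=Q=0$, yields $0=\theta\langle Q',\Lambda\rangle$ with $\langle Q',\Lambda\rangle=2K-\tfrac{25}{4}P=-\tfrac54 P\neq0$, forcing $\theta=0$, so the minimizer lies in $\mathcal A_\omega$ and has action $\ge d(\omega)$. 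I expect this step --- the existence/compactness of the constrained minimizer in the critical-type geometry of $d=5$, together with the multiplier computation --- to be the main obstacle. Granting it, a scaling computation gives the key estimate: if $(u,v)\neq0$ and $Q(u,v)<0$, then scaling to the unique $\lambda_0\in(0,1)$ with $Q(u^{\lambda_0},v^{\lambda_0})=0$ and using $S_\omega(u,v)-\tfrac12 Q(u,v)=\tfrac14 P(u,v)+\tfrac\omega2 M(u,v)$ yields
\[
S_\omega(u,v)-\tfrac12 Q(u,v)> S_\omega(u^{\lambda_0},v^{\lambda_0})\ge d(\omega),\qquad\text{that is}\qquad Q(u,v)<2\bigl(S_\omega(u,v)-d(\omega)\bigr).
\]

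Finally I would assemble the instability statement. Take $(u_0,v_0)=(\phi_\omega^\lambda,\psi_\omega^\lambda)$ with $\lambda>1$; as $\lambda\downarrow1$ these converge to $(\phi_\omega,\psi_\omega)$ in $H^1\times H^1$. Using $Q(\phi_\omega,\psi_\omega)=0$ one checks $\partial_\lambda^2 S_\omega(\phi_\omega^\lambda,\psi_\omega^\lambda)|_{\lambda=1}=-\tfrac54 P(\phi_\omega,\psi_\omega)<0$ and $Q(\phi_\omega^\lambda,\psi_\omega^\lambda)=\tfrac52 P(\phi_\omega,\psi_\omega)\,\lambda^2(1-\lambda^{1/2})<0$, so for every $\lambda>1$ near $1$ we have $S_\omega(u_0,v_0)<d(\omega)$ and $Q(u_0,v_0)<0$. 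Since $S_\omega=E+\tfrac\omega2 M$ is conserved, $S_\omega(u(t),v(t))<d(\omega)$ for all $t$, and mass conservation keeps $(u(t),v(t))\neq0$; hence the key estimate and a continuity argument show $Q(u(t),v(t))$ can never reach $0$ and in fact $Q(u(t),v(t))\le 2\bigl(S_\omega(u_0,v_0)-d(\omega)\bigr)=:-2\delta<0$ for all $t$. Then $V''(t)=8\,Q(u(t),v(t))\le-16\delta$, so the nonnegative function $V$ would become negative in finite time, which is impossible; therefore $T_{\max}<\infty$. Letting $\lambda\downarrow1$ produces blow-up solutions arbitrarily close to $(\phi_\omega,\psi_\omega)$, which is exactly the assertion of Theorem \ref{theo-str-ins}.
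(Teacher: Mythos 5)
Your overall architecture --- finite variance of the ground state via exponential decay, the virial identity $V''(t)=8\,Q(u(t),v(t))$, the flow-invariant set $\{S_\omega<d(\omega),\ Q<0\}$, the key bound $Q\le 2(S_\omega-d(\omega))$, and Glassey's convexity argument --- is exactly the paper's, and your computations check out: $\langle Q',\Lambda\rangle=2K-\tfrac{25}{4}P=-\tfrac{5}{4}P$ on $\{Q=0\}$, $Q(\phi_\omega^\lambda,\psi_\omega^\lambda)=\tfrac{5}{2}P(\phi_\omega,\psi_\omega)\,\lambda^2(1-\lambda^{1/2})$, and so on. Your derivation of the key inequality is in fact cleaner than the paper's: where Lemma \ref{lem-key-lem} integrates the differential inequality $(\gamma f'(\gamma))'\le 2f'(\gamma)$ over $(\gamma_0,1)$, you simply observe that $\lambda\mapsto S_\omega(u^\lambda,v^\lambda)-\tfrac12 Q(u^\lambda,v^\lambda)=\tfrac{\omega}{2}M(u,v)+\tfrac14\lambda^{5/2}P(u,v)$ is strictly increasing (note $Q(u,v)<0$ forces $P(u,v)>0$) and evaluate at the dilation parameter $\lambda_0\in(0,1)$ where $Q$ vanishes; the two arguments are equivalent, and yours even yields the strict inequality.

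The one genuine gap is the step you yourself flag: your proof of $\inf\{S_\omega(u,v):\,(u,v)\ne(0,0),\ Q(u,v)=0\}\ge d(\omega)$ runs through a Lagrange multiplier computation applied to \emph{a minimizer} of $S_\omega$ on $\{Q=0\}$, and if that infimum is not attained the computation yields nothing; proving attainment would require a separate concentration-compactness or profile-decomposition analysis in the constraint set $\{Q=0\}$, which you do not supply. The paper's Lemma \ref{lem-cha-gro-sta-5D} closes this step without any new compactness, by a pure scaling reduction to the Nehari-level characterization already established in Proposition \ref{prop-exi-gro-sta-ins}: given $(u,v)\ne(0,0)$ with $Q(u,v)=0$, either $K_\omega(u,v)=0$ and one is done, or, since $K_\omega(u^\gamma,v^\gamma)=\gamma^2K(u,v)+\omega M(u,v)-3\gamma^{5/2}P(u,v)$ tends to $\omega M(u,v)>0$ as $\gamma\to 0$ and to $-\infty$ as $\gamma\to\infty$, there exists $\gamma_0>0$ with $K_\omega(u^{\gamma_0},v^{\gamma_0})=0$, whence $S_\omega(u^{\gamma_0},v^{\gamma_0})\ge d(\omega)$; and because $Q(u,v)=0$ makes $\gamma=1$ the global maximum of $\gamma\mapsto S_\omega(u^\gamma,v^\gamma)$, one has $S_\omega(u,v)\ge S_\omega(u^{\gamma_0},v^{\gamma_0})$. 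If you substitute this scaling lemma for your constrained minimization --- you may freely use $d(\omega)=S_\omega(\phi_\omega,\psi_\omega)$, since Proposition \ref{prop-exi-gro-sta-ins} identifies $\mathcal{G}_\omega$ with the set of minimizers of $d(\omega)$ --- your proposal becomes a complete proof, essentially coinciding with the paper's.
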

	
	To our knowledge, this paper is the first one addresses the strong instability of standing waves for a system of nonlinear Schr\"odinger equations with quadratic interaction. In \cite{CCO-ins}, Colin-Colin-Ohta proved the instability of standing waves for a system of nonlinear Schr\"odinger equations with three waves interaction. However, they only studied the orbital instability not strong instability by blow-up, and they only consider a special standing wave solution $(0,0,e^{2i\omega t} \varphi)$, where $\varphi$ is the unique positive radial solution to the elliptic equation
	\[
	-\Delta \varphi + 2 \omega \varphi - |\varphi|^{p-1} \varphi=0.
	\]
	
	This paper is organized as follows. In Section $\ref{S:1}$, we show the existence of ground states and its characterization given in Proposition $\ref{prop-exi-gro-sta-ins}$. In Section $\ref{S:2}$, we give the proof of the strong instability of standing waves given in Theorem $\ref{theo-str-ins}$. 
	\section{Exitence of ground states}
	\label{S:1}
	
	We first show the existence of ground states given in Proposition $\ref{prop-exi-gro-sta-ins}$. To do so, we need the following profile decomposition.
	\begin{proposition}[Profile decomposition] \label{prop-pro-dec-5D}
		Let $d=5$ and $\kappa=\frac{1}{2}$. Le $(u_n,v_n)_{n\geq 1}$ be a bounded sequence in $H^1 \times H^1$. Then there exist a subsequence, still denoted by $(u_n,v_n)_{n\geq 1}$, a family $(x^j_n)_{n\geq 1}$ of sequences in $\mathbb{R}^5$ and a sequence $(U^j, V^j)_{j\geq 1}$ of $H^1\times H^1$-functions such that
		\begin{itemize}
			\item[(1)] for every $j\ne k$, 
			\begin{align} \label{ort-pro-dec-5D}
			|x^j_n-x^k_n| \rightarrow \infty \text{ as } n\rightarrow \infty; 
			\end{align}
			\item[(2)] for every $l\geq 1$ and every $x \in \mathbb{R}^5$, 
			\[
			u_n(x) = \sum_{j=1}^l U^j(x-x^j_n) + u^l_n(x), \quad v_n(x)= \sum_{j=1}^l V^j(x-x^j_n) + v^l_n(x),
			\]
			with
			\begin{align} \label{err-pro-dec-5D}
			\limsup_{n\rightarrow \infty} \|(u^l_n, v^l_n)\|_{L^q\times L^q} \rightarrow 0 \text{ as } l \rightarrow \infty,
			\end{align}
			for every $q\in (2, 10/3)$.
		\end{itemize}
		Moreover, for every $l\geq 1$,
		\begin{align}
		M(u_n,v_n) &= \sum_{j=1}^l M(U^j_n, V^j_n) + M(u^l_n,v^l_n) + o_n(1), \label{mas-pro-dec-5D} \\
		K(u_n,v_n) &= \sum_{j=1}^l K(U^j,V^j) + K(u^l_n, v^l_n) + o_n(1), \label{kin-pro-dec-5D} \\
		P(u_n,v_n) &= \sum_{j=1}^l P(U^j,V^j) + P(u^l_n, v^l_n) + o_n(1), \label{sca-pro-dec-5D}
		\end{align}
		where $o_n(1) \rightarrow 0$ as $n\rightarrow \infty$.
	\end{proposition}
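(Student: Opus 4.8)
The plan is to establish the profile decomposition by combining the classical concentration-compactness machinery of Gérard with the specific algebraic structure of the energy functionals $M$, $K$, and $P$. First I would recall the key extraction lemma: for a bounded sequence $(w_n)$ in $H^1(\mathbb{R}^5)$, one can iteratively extract profiles by looking at the quantity $\sup_{x \in \mathbb{R}^5} \liminf_n \|w_n(\cdot + x)\|$ in a suitable weak sense, obtaining nonzero weak limits $U^j(\cdot) := \text{w-}\lim_n w_n(\cdot + x_n^j)$ along translated subsequences, with the translation parameters satisfying the orthogonality \eqref{ort-pro-dec-5D}. Applying this componentwise (or to the vector-valued sequence $(u_n, v_n)$ in $H^1 \times H^1$) yields the profiles $(U^j, V^j)$ and the decomposition in item (2).

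Next I would verify the vanishing of the remainder \eqref{err-pro-dec-5D} in $L^q$ for $q \in (2, 10/3)$. The endpoint $10/3$ is exactly the $L^2$-critical Strichartz exponent in dimension $d=5$ (since $2 + 4/d = 2 + 4/5 = 14/5$... here the relevant cutoff is the interpolation range below the $H^1$-subcritical exponent $2^* = 2d/(d-2) = 10/3$). The standard argument shows that if the remainder did not vanish in some $L^q$ with $2 < q < 10/3$, one could extract a further nonzero profile, contradicting the exhaustive extraction; this uses the refined Sobolev or Gagliardo--Nirenberg inequality bounding the $L^q$ norm by the $\dot{H}^1$ norm times a factor that decays as the profiles are exhausted. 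I would carry out the iteration so that $\sum_j \|(U^j, V^j)\|^2_{H^1 \times H^1}$ converges (bounded by $\limsup_n \|(u_n, v_n)\|^2$), which forces $\|(U^j, V^j)\| \to 0$ as $j \to \infty$ and drives the remainder to zero.

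The asymptotic orthogonality relations for the functionals are then the heart of the matter. For the mass \eqref{mas-pro-dec-5D} and the kinetic energy \eqref{kin-pro-dec-5D}, which are quadratic, the identities follow from the Hilbert-space Pythagorean expansion: the cross terms $\langle U^j(\cdot - x_n^j), U^k(\cdot - x_n^k)\rangle$ for $j \neq k$ tend to zero by the divergence \eqref{ort-pro-dec-5D} of the translation parameters (after a change of variables, one profile is translated arbitrarily far and the overlap vanishes), and the cross terms between the profiles and the remainder vanish by weak convergence. For the interaction term $P(u,v) = \mathrm{Re}\int \overline{v}\, u^2\, dx$, which is cubic and involves a \emph{product} of the two components, I would expand the triple sum and show that all genuinely mixed terms vanish: same-index-but-different-profile contributions disappear by spatial separation, while the contribution of the remainder is controlled precisely because \eqref{err-pro-dec-5D} gives $L^q$-smallness for some $q \in (2, 10/3)$ and $P$ can be estimated via Hölder by an $L^3$-type norm lying in this admissible range (using that $u^2 \overline{v}$ is integrable when each factor sits in the appropriate Lebesgue space by Gagliardo--Nirenberg).

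The main obstacle I anticipate is the rigorous treatment of the cubic interaction term $P$. Unlike the quadratic quantities, $P$ mixes the two components and is trilinear, so controlling the remainder requires that the exponent range $(2, 10/3)$ in \eqref{err-pro-dec-5D} be exactly matched to what Hölder's inequality demands for $\int |v|\,|u|^2\,dx$; the delicate point is to verify that one can distribute the Lebesgue exponents so that each factor falls strictly below the $H^1$-critical threshold in $d=5$, allowing the Gagliardo--Nirenberg embedding $H^1 \hookrightarrow L^q$ for $q \in (2, 10/3)$ to close the estimate uniformly in $l$ and $n$. Once this term is handled, the quadratic terms follow by routine orthogonality arguments.
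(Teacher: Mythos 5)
First, note that the paper itself contains no proof of this proposition: it defers entirely to \cite[Proposition 3.5]{Dinh}, whose proof is precisely the Hmidi--Keraani-style concentration-compactness argument you outline (iterative extraction of weak limits along translated subsequences, pairwise divergence of the translations, remainder smallness in the subcritical range $2<q<2^*=\frac{2d}{d-2}=\frac{10}{3}$, Pythagorean expansion of the quadratic functionals, and a H\"older bound $|P(u,v)|\le \|v\|_{L^3}\|u\|_{L^3}^2$ with $3\in(2,10/3)$ for the cubic term). So your overall architecture is the right one and coincides with the cited proof. A cosmetic correction: $10/3$ is the $H^1$-critical Sobolev exponent in $d=5$, not an ``$L^2$-critical Strichartz exponent''; your parenthetical $2+4/d=14/5$ computes the mass-critical power, which is irrelevant here (you do self-correct).

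Two substantive points need repair. First, the extraction must be run on the vector-valued sequence $(u_n,v_n)$ in $H^1\times H^1$, which you offer only as an alternative; genuinely ``componentwise'' extraction is not directly viable, since it produces two unrelated families of translations, while the statement requires a single family $(x^j_n)$ common to both components. Merging two families is possible but requires an extra argument (pairs with bounded differences must be collapsed into one translation, allowing profiles with one vanishing component), so the vector-valued route should be primary. Second, and more importantly, your mechanism for handling the remainder in the expansion of $P$ does not prove the statement as claimed: identity \eqref{sca-pro-dec-5D} is asserted for every \emph{fixed} $l$ with error $o_n(1)$ as $n\to\infty$, whereas the smallness \eqref{err-pro-dec-5D} that you invoke only takes effect in the limit $l\to\infty$. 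For fixed $l$, after expanding $P$ trilinearly with $u_n=\sum_{j\le l}U^j(\cdot-x^j_n)+u^l_n$ and likewise for $v_n$, the mixed terms containing at least one remainder factor and at least one profile factor must be shown to vanish as $n\to\infty$; this follows not from \eqref{err-pro-dec-5D} but from the construction property $u^l_n(\cdot+x^j_n)\rightharpoonup 0$ and $v^l_n(\cdot+x^j_n)\rightharpoonup 0$ weakly in $H^1$ for each $j\le l$, combined with Rellich compactness after approximating the profiles by compactly supported functions. With that substitution (weak convergence plus local compactness for the fixed-$l$ cross terms, reserving \eqref{err-pro-dec-5D} for the double-limit statements used downstream, e.g.\ $\limsup_n P(u^l_n,v^l_n)\to 0$), your plan closes and reproduces the cited proof.
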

	We refer the reader to \cite[Proposition 3.5]{Dinh} for the proof of this profile decomposition. 
	
	The proof of Proposition $\ref{prop-exi-gro-sta-ins}$ is done by several lemmas. To simplify the notation, we denote for $\omega>0$,
	\[
	H_\omega(u,v):= K(u,v) + \omega M(u,v).
	\]
	It is easy to see that for $\omega>0$ fixed, 
	\begin{align} \label{equ-nor}
	H_\omega(u,v) \sim \|(u,v)\|_{H^1 \times H^1}.
	\end{align}
	Note also that
	\[
	S_\omega(u,v)=\frac{1}{2} K_\omega(u,v)+\frac{1}{2}P(u,v)= \frac{1}{3}K_\omega(u,v)+\frac{1}{6} H_\omega(u,v).
	\]
	
	\begin{lemma} \label{lem-pos-d-ome}
		$d(\omega)>0$.
	\end{lemma}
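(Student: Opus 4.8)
Prove $d(\omega) > 0$ where
$$d(\omega) = \inf\{S_\omega(u,v) : (u,v) \neq (0,0), K_\omega(u,v) = 0\}.$$

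Let me understand the setup. We have:
- $K(u,v) = \|\nabla u\|^2 + \frac{1}{2}\|\nabla v\|^2$
- $M(u,v) = \|u\|^2 + 2\|v\|^2$
- $P(u,v) = \text{Re}\int \bar{v}u^2\, dx$
- $K_\omega(u,v) = K(u,v) + \omega M(u,v) - 3P(u,v) = H_\omega(u,v) - 3P(u,v)$
- $S_\omega(u,v) = \frac{1}{2}K(u,v) + \frac{\omega}{2}M(u,v) - P(u,v)$

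We have the key identities:
$$H_\omega(u,v) = K(u,v) + \omega M(u,v)$$
$$S_\omega(u,v) = \frac{1}{2}K_\omega(u,v) + \frac{1}{2}P(u,v) = \frac{1}{3}K_\omega(u,v) + \frac{1}{6}H_\omega(u,v)$$

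When $K_\omega(u,v) = 0$, we have $H_\omega(u,v) = 3P(u,v)$, so:
$$S_\omega(u,v) = \frac{1}{6}H_\omega(u,v) = \frac{1}{2}P(u,v)$$

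So on the constraint set, $S_\omega(u,v) = \frac{1}{6}H_\omega(u,v)$.

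Since $H_\omega(u,v) \sim \|(u,v)\|_{H^1\times H^1}^2 > 0$ for nonzero $(u,v)$, we have $S_\omega > 0$ on the constraint. But we need the **infimum** to be positive, which requires a lower bound.

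**The approach:** Need to show that if $K_\omega(u,v) = 0$ with $(u,v) \neq 0$, then $H_\omega(u,v) \geq c > 0$ for some constant.

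On the constraint $K_\omega = 0$: $H_\omega(u,v) = 3P(u,v)$.

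Now I need to bound $P(u,v)$ by $H_\omega$. By Hölder/Sobolev:
$$|P(u,v)| = \left|\text{Re}\int \bar{v}u^2\right| \leq \|v\|_{L^3}\|u\|_{L^3}^2 \leq C\|v\|_{H^1}\|u\|_{H^1}^2$$

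In $d=5$: $H^1 \hookrightarrow L^q$ for $2 \leq q \leq \frac{2d}{d-2} = \frac{10}{3}$. And $L^3$: since $3 < 10/3$, this embedding holds.

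So $|P(u,v)| \leq C\|(u,v)\|_{H^1}^3 \leq C' H_\omega(u,v)^{3/2}$.

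Therefore on the constraint:
$$H_\omega(u,v) = 3P(u,v) \leq 3|P(u,v)| \leq C' H_\omega(u,v)^{3/2}$$

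Dividing by $H_\omega > 0$:
$$1 \leq C' H_\omega(u,v)^{1/2}$$
$$H_\omega(u,v) \geq \frac{1}{(C')^2} > 0$$

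Therefore:
$$S_\omega(u,v) = \frac{1}{6}H_\omega(u,v) \geq \frac{1}{6(C')^2} > 0$$

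Taking infimum, $d(\omega) \geq \frac{1}{6(C')^2} > 0$. Done!

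**Summary of the plan:** The key is the Sobolev estimate $|P| \lesssim H_\omega^{3/2}$, combined with the constraint identity $H_\omega = 3P$, to force $H_\omega$ away from zero.

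=== PROOF PROPOSAL ===

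The plan is to exploit the identity $S_\omega = \tfrac16 H_\omega$ on the constraint set together with a Gagliardo--Nirenberg-type bound showing that the cubic functional $P$ is dominated by a power of $H_\omega$ strictly larger than one. Since $H_\omega(u,v)\sim\|(u,v)\|_{H^1\times H^1}^2$ by \eqref{equ-nor}, it suffices to bound $H_\omega$ away from zero on $\{K_\omega=0\}\setminus\{(0,0)\}$.

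First I would record the consequences of the constraint. If $K_\omega(u,v)=0$, then from $K_\omega=H_\omega-3P$ we get $H_\omega(u,v)=3P(u,v)$, and then from $S_\omega=\tfrac13 K_\omega+\tfrac16 H_\omega$ we obtain the clean identity
\[
S_\omega(u,v)=\tfrac16 H_\omega(u,v)\qquad\text{whenever }K_\omega(u,v)=0.
\]
Thus the problem reduces to showing $\inf H_\omega>0$ over the constraint set.

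Next I would estimate $P$ by Sobolev embedding. By H\"older,
\[
|P(u,v)|=\left|\mathrm{Re}\int\overline{v}\,u^2\,dx\right|\le \|v\|_{L^3}\|u\|_{L^3}^2.
\]
In dimension $d=5$ one has $H^1\hookrightarrow L^q$ for $2\le q\le \tfrac{2d}{d-2}=\tfrac{10}{3}$, and since $3\in[2,\tfrac{10}{3}]$ this applies to give $\|u\|_{L^3}\lesssim\|u\|_{H^1}$ and likewise for $v$. Combining with \eqref{equ-nor}, there is a constant $C>0$ with
\[
|P(u,v)|\le C\,\|(u,v)\|_{H^1\times H^1}^3\le C\,H_\omega(u,v)^{3/2}.
\]

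Finally I would combine the two ingredients. On the constraint set with $(u,v)\ne(0,0)$ we have $H_\omega(u,v)>0$ and
\[
H_\omega(u,v)=3P(u,v)\le 3|P(u,v)|\le 3C\,H_\omega(u,v)^{3/2}.
\]
Dividing by $H_\omega(u,v)>0$ yields $H_\omega(u,v)^{1/2}\ge\tfrac{1}{3C}$, hence $H_\omega(u,v)\ge\tfrac{1}{9C^2}$, and therefore $S_\omega(u,v)=\tfrac16 H_\omega(u,v)\ge\tfrac{1}{54C^2}$. Taking the infimum gives $d(\omega)\ge\tfrac{1}{54C^2}>0$. The only real point requiring care is verifying that the exponent $3$ lies in the admissible Sobolev range in $d=5$, which guarantees the superlinear power $H_\omega^{3/2}$; this superlinearity is exactly what prevents the infimum from degenerating to zero.
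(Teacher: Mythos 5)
Your proof is correct and follows essentially the same route as the paper: H\"older plus the Sobolev embedding $H^1(\mathbb{R}^5)\hookrightarrow L^3$ to bound $P(u,v)\lesssim H_\omega(u,v)^{3/2}$, combined with the constraint identity $H_\omega=3P$ to force $H_\omega$ (equivalently $P$, which the paper uses via $S_\omega=\tfrac12 P$ on the constraint set) away from zero. If anything, your bookkeeping is more careful than the paper's, whose displayed chain contains exponent slips (writing $[H_\omega]^3\lesssim[P]^3$ where the scaling gives $H_\omega^{3/2}$) and invokes the non-scale-invariant bound $\|u\|_{L^3}\lesssim\|\nabla u\|_{L^2}$, which should use the full $H^1$ norm as you do via \eqref{equ-nor}.
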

	
	\begin{proof}
		Let $(u,v) \in H^1 \times H^1 \backslash \{(0,0)\}$ be such that $K_\omega(u,v)=0$ or $H(u,v) = 3 P(u,v)$. We have from Sobolev embedding that
		\begin{align*}
		P(u,v) \leq \int |v| |u|^2 dx \lesssim \|v\|_{L^3} \|u\|^2_{L^3}  \lesssim \|\nabla v\|_{L^2} \|\nabla u\|^2_{L^2} \lesssim [H_\omega(u,v)]^3 \lesssim [P(u,v)]^3.
		\end{align*}
		This implies that there exists $C>0$ such that
		\[
		P(u,v) \geq \sqrt{\frac{1}{C}}>0.
		\]
		Thus
		\[
		S_\omega(u,v) = \frac{1}{2} K(u,v) + \frac{1}{2} P(u,v) \geq \frac{1}{2} \sqrt{\frac{1}{C}}>0.
		\]
		Taking the infimum over all $(u,v)\in H^1 \times H^1 \backslash \{(0,0)\}$ satisfying $K_\omega(u,v)=0$, we get the result.
	\end{proof}
	
	We now denote the set of all minimizers for $d(\omega)$ by
	\[
	\mathcal{M}_\omega := \left\{ (u,v) \in H^1 \times H^1 \backslash \{(0,0)\} \ : \ K_\omega(u,v) =0, S_\omega(u,v) = d(\omega) \right\}.
	\]
	\begin{lemma} \label{lem-no-emp-M-ome}
		The set $\mathcal{M}_\omega$ is not empty.
	\end{lemma}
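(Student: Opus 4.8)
The plan is to run the standard concentration--compactness argument built on the profile decomposition of Proposition \ref{prop-pro-dec-5D}. First I would take a minimizing sequence $(u_n,v_n)$ for $d(\omega)$, so that $K_\omega(u_n,v_n)=0$ and $S_\omega(u_n,v_n)\to d(\omega)$. Since $K_\omega(u_n,v_n)=0$, the identity $S_\omega=\frac13 K_\omega+\frac16 H_\omega$ gives $H_\omega(u_n,v_n)=6S_\omega(u_n,v_n)\to 6d(\omega)$, so by \eqref{equ-nor} the sequence is bounded in $H^1\times H^1$. I then apply Proposition \ref{prop-pro-dec-5D} to extract profiles $(U^j,V^j)$, translation parameters $x^j_n$, and remainders $(u^l_n,v^l_n)$. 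The crucial observation is that the exponent $3$ lies in $(2,10/3)$, so by H\"older ($|P(u,v)|\lesssim \|v\|_{L^3}\|u\|^2_{L^3}$) the remainder contributes negligibly to $P$, i.e. $\limsup_n|P(u^l_n,v^l_n)|\to 0$ as $l\to\infty$. Hence from \eqref{mas-pro-dec-5D}, \eqref{kin-pro-dec-5D} and \eqref{sca-pro-dec-5D} the functionals $H_\omega$, $P$, $S_\omega$ and $K_\omega$ all split additively over the profiles in the limit, with the remainder entering only through $H_\omega(u^l_n,v^l_n)\ge 0$.

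The heart of the argument is to show that every non-trivial profile satisfies $K_\omega(U^j,V^j)\ge 0$. Suppose not: if $K_\omega(U^j,V^j)<0$ then $3P(U^j,V^j)>H_\omega(U^j,V^j)\ge 0$, so $P(U^j,V^j)>0$, and the scaling $\gamma_j:=H_\omega(U^j,V^j)/(3P(U^j,V^j))\in(0,1)$ puts $(\gamma_j U^j,\gamma_j V^j)$ on the Nehari constraint. Using the quadratic homogeneity of $H_\omega$, the cubic homogeneity of $P$, and $S_\omega=\frac16 H_\omega$ on $\{K_\omega=0\}$, the definition of $d(\omega)$ forces $\frac16\gamma_j^2 H_\omega(U^j,V^j)\ge d(\omega)$, i.e. $H_\omega(U^j,V^j)>6d(\omega)$ since $\gamma_j<1$. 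But the additive splitting of $H_\omega$ together with $H_\omega(u^l_n,v^l_n)\ge 0$ gives $\sum_j H_\omega(U^j,V^j)\le 6d(\omega)$, a contradiction. With $K_\omega(U^j,V^j)\ge 0$ for all $j$ in hand, I pass to the limit in the decomposition of $K_\omega$: since $K_\omega(u_n,v_n)=0$ and the remainder satisfies $\lim_l\lim_n K_\omega(u^l_n,v^l_n)=\lim_l\lim_n H_\omega(u^l_n,v^l_n)\ge 0$, I obtain $\sum_j K_\omega(U^j,V^j)\le 0$, whence $K_\omega(U^j,V^j)=0$ for every $j$ and, simultaneously, $\lim_l\lim_n H_\omega(u^l_n,v^l_n)=0$. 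By \eqref{equ-nor} this means the remainder tends to $0$ in $H^1\times H^1$.

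To conclude, I exploit the additivity of $S_\omega$. Because the remainder's $P$ and $H_\omega$ both vanish in the limit, $\lim_l\lim_n S_\omega(u^l_n,v^l_n)=0$, so $\sum_j S_\omega(U^j,V^j)=d(\omega)$. Since $\sum_j P(U^j,V^j)=\lim_n P(u_n,v_n)=2d(\omega)>0$, where positivity uses $d(\omega)>0$ from Lemma \ref{lem-pos-d-ome}, at least one profile is non-trivial. Each non-trivial profile lies on $\{K_\omega=0\}$, hence satisfies $S_\omega(U^j,V^j)\ge d(\omega)>0$; as these values sum to exactly $d(\omega)$, there can be precisely one non-trivial profile, say $(U^1,V^1)$, and it obeys $K_\omega(U^1,V^1)=0$ and $S_\omega(U^1,V^1)=d(\omega)$. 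Thus $(U^1,V^1)\in\mathcal{M}_\omega$, proving $\mathcal{M}_\omega\ne\emptyset$ (and, as a byproduct, that $(u_n,v_n)$ converges to $(U^1,V^1)$ in $H^1\times H^1$ up to translation).

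I expect the main obstacle to be the careful bookkeeping of the two nested limits ($n\to\infty$ followed by $l\to\infty$) and making fully rigorous that the remainder's contributions to $P$, $S_\omega$ and $K_\omega$ genuinely vanish; the scaling step establishing $K_\omega(U^j,V^j)\ge 0$ is the conceptual core, but it is elementary once the homogeneities of $H_\omega$ and $P$ are tracked.
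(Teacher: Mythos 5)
Your proposal is correct and follows essentially the same route as the paper's proof: a minimizing sequence made bounded via $H_\omega=6S_\omega$ on the constraint, the profile decomposition of Proposition \ref{prop-pro-dec-5D} with the remainder's contribution to $P$ killed by H\"older and \eqref{err-pro-dec-5D}, the same rescaling $\gamma = H_\omega/(3P)\in(0,1)$ to rule out profiles with $K_\omega(U^j,V^j)<0$ against the bound $\sum_j H_\omega(U^j,V^j)\leq 6d(\omega)$, and the conclusion that exactly one non-trivial profile survives and belongs to $\mathcal{M}_\omega$. The only cosmetic difference is that you certify uniqueness of the non-trivial profile through the additivity of $S_\omega$ (each non-trivial profile on the Nehari constraint costs at least $d(\omega)$), whereas the paper argues via $d(\omega)\leq S_\omega(U^{j_1},V^{j_1})=\frac{1}{6}H_\omega(U^{j_1},V^{j_1})<d(\omega)$; the two bookkeepings are equivalent.
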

	
	\begin{proof}
		Let $(u_n,v_n)_{n\geq 1}$ be a minimizing sequence for $d(\omega)$, i.e. $(u_n, v_n) \in H^1 \times H^1 \backslash \{(0,0)\}$, $K_\omega(u_n,v_n) =0$ for any $n\geq 1$ and $\lim_{n\rightarrow \infty} S_\omega(u_n,v_n) = d(\omega)$. Since $K_\omega(u_n,v_n) = 0$, we have that $H_\omega(u_n,v_n) = 3 P(u_n,v_n)$ for any $n\geq 1$. We also have that 
		\[
		S_\omega (u_n, v_n) = \frac{1}{3} K_\omega(u_n,v_n) + \frac{1}{6}H_\omega(u_n,v_n) \rightarrow d(\omega) \text{ as } n\rightarrow \infty.
		\]
		This yields that there exists $C>0$ such that 
		\[
		H_\omega(u_n,v_n) \leq 6 d(\omega) + C
		\]
		for all $n\geq 1$. By \eqref{equ-nor}, $(u_n,v_n)_{n\geq 1}$ is a bounded sequence in $H^1 \times H^1$. We apply the profile decomposition given in Proposition $\ref{prop-pro-dec-5D}$ to get up to a subsequence,
		\[
		u_n(x) = \sum_{j=1}^l U^j(x-x^j_n) + u^l_n(x), \quad v_n(x) = \sum_{j=1}^l V^j(x-x^j_n) + v^l_n(x)
		\]
		for some family of sequences $(x^j_n)_{n\geq 1}$ in $\mathbb{R}^5$ and $(U^j,V^j)_{j\geq 1}$ a sequence of $H^1 \times H^1$-functions satisfying \eqref{err-pro-dec-5D} -- \eqref{sca-pro-dec-5D}. We see that
		\[
		H_\omega (u_n,v_n)=\sum_{j=1}^l H_\omega (U^j,V^j) + H_\omega(u^l_n, v^l_n) + o_n(1).
		\]
		This implies that
		\begin{align*}
		K_\omega(u_n,v_n)&=H_\omega(u_n,v_n)-3P(u_n,v_n)\\
		&=\sum_{j=1}^l H_\omega(U^j,V^j) + H_\omega(u^l_n,v^l_n) - 3P(u_n,v_n)+ o_n(1) \\
		&=\sum_{j=1}^l K_\omega(U^j,V^j) + 3\sum_{j=1}^l P(U^j,V^j)- 3 P(u_n,v_n) + H_\omega(u^l_n,v^l_n)+o_n(1).
		\end{align*}
		Since $K_\omega(u_n,v_n)=0$ for any $n\geq 1$, $P(u_n,v_n) \rightarrow 2 d(\omega)$ as $n\rightarrow \infty$ and $H_\omega(u^l_n,v^l_n) \geq 0$ for any $n\geq 1$, we infer that
		\[
		\sum_{j=1}^l K_\omega(U^j,V^j) + 3 \sum_{j=1}^l P(U^j,V^j) - 6 d(\omega) \leq 0
		\]
		or
		\[
		\sum_{j=1}^l H_\omega (U^j,V^j) - 6d(\omega) \leq 0.
		\]
		By H\"older's inequality and \eqref{err-pro-dec-5D}, it is easy to see that $\limsup_{n\rightarrow \infty} P(u_n^l, v^l_n) =0$ as $l\rightarrow \infty$. Thanks to \eqref{sca-pro-dec-5D}, we have that
		\[
		2d(\omega) = \lim_{n\rightarrow \infty} P(u_n,v_n) = \sum_{j=1}^\infty P(U^j,V^j).
		\]
		We thus obtain
		\begin{align} \label{pro-dec-app-5D}
		\sum_{j=1}^\infty K_\omega(U^j,V^j) \leq 0 \text{ and } \sum_{j=1}^\infty H_\omega(U^j,V^j) \leq 6d(\omega).
		\end{align}
		We now claim that $K_\omega(U^j,V^j) =0$ for all $j\geq 1$. Indeed, suppose that if there exists $j_0 \geq 1$ such that $K_\omega(U^{j_0},V^{j_0}) <0$, then we see that the equation $K_\omega(\gamma U^{j_0}, \gamma V^{j_0}) = \gamma^2 H_\omega(U^{j_0},V^{j_0}) - 3 \gamma^3 P(U^{j_0},V^{j_0})=0$ admits a unique non-zero solution 
		\[
		\gamma_0 := \frac{H_\omega(U^{j_0},V^{j_0})}{3 P(U^{j_0}, V^{j_0})} \in (0,1).
		\]
		By the definition of $d(\omega)$, we have
		\[
		d(\omega) \leq S_\omega(\gamma_0 U^{j_0}, \gamma_0 V^{j_0}) = \frac{1}{6} H_\omega(\gamma_0 U^{j_0}, \gamma_0 V^{j_0}) = \frac{\gamma_0^2}{6} H(U^{j_0},V^{j_0}) <\frac{1}{6} H_\omega(U^{j_0},V^{j_0})
		\]
		which contradicts to the second inequality in \eqref{pro-dec-app-5D}. We next claim that there exists only one $j$ such that $(U^j,V^j)$ is non-zero. Indeed, if there are $(U^{j_1},V^{j_1})$ and $(U^{j_2},V^{j_2})$ non-zero, then by \eqref{pro-dec-app-5D}, both $H_\omega(U^{j_1},V^{j_1})$ and $H_\omega(U^{j_2},V^{j_2})$ are strictly smaller than $6d(\omega)$. Moreover, since $K_\omega(U^{j_1},V^{j_1}) =0$,
		\[
		d(\omega) \leq S_\omega(U^{j_1},V^{j_1}) = \frac{1}{6} H_\omega(U^{j_1},V^{j_1}) <d(\omega)
		\]
		which is absurd. Therefore, without loss of generality we may assume that the only one non-zero profile is $(U^1,V^1)$. We will show that $(U^1,V^1) \in \mathcal{M}_\omega$. Indeed, we have $P(U^1,V^1) = 2d(\omega)>0$ which implies $(U^1,V^1) \ne (0,0)$. We also have 
		\[
		K_\omega(U^1,V^1) =0 \text{ and } S_\omega(U^1,V^1) = \frac{1}{2} P(U^1,V^1) =d(\omega).
		\]
		This shows that $(U^1,V^1)$ is a minimizer for $d(\omega)$. The proof is complete.
	\end{proof}
	
	\begin{lemma} \label{lem-inc-M-G}
		$\mathcal{M}_\omega \subset \mathcal{G}_\omega$.
	\end{lemma}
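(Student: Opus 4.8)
The plan is to show that every constrained minimizer for $d(\omega)$ is in fact a free critical point of $S_\omega$, and then that $d(\omega)$ coincides with the ground state level. Let $(u,v)\in\mathcal{M}_\omega$, so that $K_\omega(u,v)=0$ and $S_\omega(u,v)=d(\omega)$. Since $(u,v)$ minimizes the $C^1$ functional $S_\omega$ subject to the single scalar constraint $K_\omega=0$, the Lagrange multiplier rule furnishes $\eta\in\mathbb{R}$ with
\[
S'_\omega(u,v)=\eta\,K'_\omega(u,v),
\]
as soon as the constraint is nondegenerate at $(u,v)$, i.e. $K'_\omega(u,v)\ne 0$.

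First I would record the two scaling identities that drive the argument. From $S_\omega(\gamma u,\gamma v)=\tfrac12\gamma^2 H_\omega(u,v)-\gamma^3 P(u,v)$ and $K_\omega(\gamma u,\gamma v)=\gamma^2 H_\omega(u,v)-3\gamma^3 P(u,v)$, differentiating at $\gamma=1$ gives
\[
\langle S'_\omega(u,v),(u,v)\rangle = K_\omega(u,v), \qquad \langle K'_\omega(u,v),(u,v)\rangle = 2H_\omega(u,v)-9P(u,v).
\]
Using $K_\omega(u,v)=0$, which means $H_\omega(u,v)=3P(u,v)$, the second identity collapses to $\langle K'_\omega(u,v),(u,v)\rangle=-3P(u,v)$. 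As in the proof of Lemma \ref{lem-no-emp-M-ome}, the relation $S_\omega=\tfrac12 K_\omega+\tfrac12 P$ together with $K_\omega(u,v)=0$ and $S_\omega(u,v)=d(\omega)$ yields $P(u,v)=2d(\omega)$, which is strictly positive by Lemma \ref{lem-pos-d-ome}. Hence $\langle K'_\omega(u,v),(u,v)\rangle=-6d(\omega)<0$, so in particular $K'_\omega(u,v)\ne 0$ and the multiplier rule indeed applies.

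Next I would show $\eta=0$. Pairing the Euler--Lagrange identity with $(u,v)$ and inserting the computations above,
\[
0=K_\omega(u,v)=\langle S'_\omega(u,v),(u,v)\rangle=\eta\,\langle K'_\omega(u,v),(u,v)\rangle=-6\eta\,d(\omega).
\]
Since $d(\omega)>0$, this forces $\eta=0$, whence $S'_\omega(u,v)=0$ and therefore $(u,v)\in\mathcal{A}_\omega$.

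Finally I would identify $d(\omega)$ with the ground state energy. Any $(\tilde u,\tilde v)\in\mathcal{A}_\omega$ satisfies $S'_\omega(\tilde u,\tilde v)=0$, and pairing with $(\tilde u,\tilde v)$ gives $K_\omega(\tilde u,\tilde v)=\langle S'_\omega(\tilde u,\tilde v),(\tilde u,\tilde v)\rangle=0$; thus $\mathcal{A}_\omega$ lies inside the constraint set defining $d(\omega)$, so $S_\omega(\tilde u,\tilde v)\ge d(\omega)$ for all $(\tilde u,\tilde v)\in\mathcal{A}_\omega$. Since $(u,v)\in\mathcal{A}_\omega$ with $S_\omega(u,v)=d(\omega)$, it minimizes $S_\omega$ over $\mathcal{A}_\omega$, i.e. $(u,v)\in\mathcal{G}_\omega$, proving the inclusion. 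The one genuinely delicate point is the Lagrange multiplier step -- namely the $C^1$ regularity of $S_\omega$ and $K_\omega$ on $H^1\times H^1$ (so that $S'_\omega,K'_\omega$ are well defined as real-linear functionals on the complex space) and the nondegeneracy $K'_\omega(u,v)\ne 0$; once these are in hand the rest is a clean bookkeeping of the quadratic/cubic scaling identities.
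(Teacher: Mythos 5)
Your proof is correct and follows essentially the same route as the paper: the Lagrange multiplier identity paired with $(u,v)$, the computation $\langle K'_\omega(u,v),(u,v)\rangle=2H_\omega(u,v)-9P(u,v)=-3P(u,v)<0$ forcing the multiplier to vanish, and then the observation that every element of $\mathcal{A}_\omega$ satisfies the Nehari constraint so that $d(\omega)$ is the ground state level. The only difference is that you explicitly verify the nondegeneracy $K'_\omega(u,v)\ne 0$ via $P(u,v)=2d(\omega)>0$, a point the paper leaves implicit.
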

	
	\begin{proof}
		Let $(\phi,\psi) \in \mathcal{M}_\omega$. Since $K_\omega(\phi,\psi) =0$, we have $H_\omega(\phi,\psi) = 3 P(\phi,\psi)$. On the other hand, since $(\phi,\psi)$ is a minimizer for $d(\omega)$, there exists a Lagrange multiplier $\gamma \in \mathbb{R}$ such that
		\[
		S'_\omega(\phi,\psi) = \gamma K'_\omega(\phi,\psi).
		\]
		This implies that
		\[
		0 = K_\omega(\phi,\psi) = \langle S'_\omega(\phi,\psi), (\phi,\psi)\rangle = \gamma \langle K'_\omega(\phi,\psi), (\phi,\psi)\rangle.
		\]
		A direct computation shows that
		\[
		\langle K'_\omega(\phi,\psi), (\phi,\psi)\rangle = 2 K(\phi,\psi) + 2 \omega M(\phi,\psi) - 9 P(\phi,\psi) = 2 H_\omega(\phi,\psi) - 9 P(\phi,\psi) = - 3P(\phi,\psi) <0.
		\]
		Therefore, $\gamma = 0$ and $S'_\omega(\phi,\psi) =0$ or $(\phi,\psi) \in \mathcal{A}_\omega$. It remains to show that $S_\omega(\phi,\psi) \leq S_\omega(u,v)$ for all $(u,v) \in \mathcal{A}_\omega$. Let $(u,v) \in\mathcal{A}_\omega$. We have $K_\omega(u,v) = \langle S'_\omega(u,v), (u,v) \rangle =0$. By the definition of $d(\omega)$, we get $S_\omega(\phi,\psi) \leq S_\omega(u,v)$. The proof is complete.
	\end{proof}
	
	\begin{lemma} \label{lem-inc-G-M}
		$\mathcal{G}_\omega \subset \mathcal{M}_\omega$.
	\end{lemma}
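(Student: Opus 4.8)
The plan is to combine the three preceding lemmas to show that both defining properties of $\mathcal{M}_\omega$ are forced on any element of $\mathcal{G}_\omega$. Fix $(\phi,\psi) \in \mathcal{G}_\omega$. By definition $(\phi,\psi) \in \mathcal{A}_\omega$, so $S'_\omega(\phi,\psi) = 0$. First I would observe that pairing this critical-point equation with $(\phi,\psi)$ itself recovers the Nehari functional: since $K_\omega(u,v) = \left. \partial_\gamma S_\omega(\gamma u,\gamma v)\right|_{\gamma=1} = \langle S'_\omega(u,v), (u,v)\rangle$, we immediately get $K_\omega(\phi,\psi) = 0$. Because $(\phi,\psi) \ne (0,0)$ and $K_\omega(\phi,\psi) = 0$, the very definition of $d(\omega)$ in \eqref{d-ome} yields the lower bound $S_\omega(\phi,\psi) \geq d(\omega)$.

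For the matching upper bound I would produce a competitor in $\mathcal{A}_\omega$ that already attains $d(\omega)$. By Lemma \ref{lem-no-emp-M-ome} the set $\mathcal{M}_\omega$ is non-empty, so choose $(u_*,v_*) \in \mathcal{M}_\omega$; by Lemma \ref{lem-inc-M-G} we have $(u_*,v_*) \in \mathcal{G}_\omega \subset \mathcal{A}_\omega$, and $S_\omega(u_*,v_*) = d(\omega)$ by construction. Since $(\phi,\psi)$ is a minimizer of $S_\omega$ over $\mathcal{A}_\omega$ and $(u_*,v_*) \in \mathcal{A}_\omega$, the minimality property defining $\mathcal{G}_\omega$ gives $S_\omega(\phi,\psi) \leq S_\omega(u_*,v_*) = d(\omega)$.

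Combining the two inequalities forces $S_\omega(\phi,\psi) = d(\omega)$. Together with $K_\omega(\phi,\psi) = 0$ and $(\phi,\psi) \ne (0,0)$, this is exactly the membership condition for $\mathcal{M}_\omega$, so $(\phi,\psi) \in \mathcal{M}_\omega$, which completes the inclusion (and, with Lemma \ref{lem-inc-M-G}, establishes the equality $\mathcal{G}_\omega = \mathcal{M}_\omega$ underlying the characterization in Proposition \ref{prop-exi-gro-sta-ins}).

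I do not expect a genuine technical obstacle here: the argument is a bookkeeping chain through the definitions and the earlier lemmas, with no new estimates required. The two points that must be handled with care are, first, the identity $K_\omega(\phi,\psi) = \langle S'_\omega(\phi,\psi), (\phi,\psi)\rangle$, which converts the abstract criticality $S'_\omega(\phi,\psi) = 0$ into the scalar constraint needed to invoke the definition of $d(\omega)$; and second, the realization that the non-emptiness of $\mathcal{M}_\omega$ together with the inclusion $\mathcal{M}_\omega \subset \mathcal{G}_\omega$ is precisely what supplies an admissible competitor in $\mathcal{A}_\omega$ achieving the value $d(\omega)$, furnishing the upper bound. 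Without both previous lemmas one could not close the two-sided estimate.
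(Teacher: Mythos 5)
Your proposal is correct and follows essentially the same route as the paper: both use the Nehari identity $K_\omega(\phi,\psi)=\langle S'_\omega(\phi,\psi),(\phi,\psi)\rangle=0$ and import a competitor from $\mathcal{M}_\omega$ via Lemmas \ref{lem-no-emp-M-ome} and \ref{lem-inc-M-G} to pin down $S_\omega(\phi,\psi)=d(\omega)$. The only cosmetic difference is that you split the value identification into a two-sided estimate (lower bound from the definition of $d(\omega)$, upper bound from minimality over $\mathcal{A}_\omega$), whereas the paper gets the equality in one step since both pairs lie in $\mathcal{G}_\omega$.
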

	
	\begin{proof}
		Let $(\phi_\omega, \psi_\omega) \in \mathcal{G}_\omega$. Since $\mathcal{M}_\omega$ is not empty, we take $(\phi,\psi) \in \mathcal{M}_\omega$. We have from Lemma $\ref{lem-inc-M-G}$ that $(\phi,\psi) \in \mathcal{G}_\omega$. Thus, $S_\omega(\phi_\omega,\psi_\omega) = S_\omega(\phi, \psi)=d(\omega)$. It remains to show that $K_\omega(\phi_\omega,\psi_\omega)=0$. Since $(\phi_\omega,\psi_\omega) \in \mathcal{A}_\omega$, $S'_\omega(\phi_\omega,\psi_\omega)=0$. This implies that
		\[
		K_\omega(\phi_\omega,\psi_\omega) = \langle S'_\omega(\phi_\omega,\psi_\omega), (\phi_\omega,\psi_\omega) \rangle =0.
		\]
		The proof is complete.
	\end{proof}
	
	\noindent \textit{Proof of Proposition $\ref{prop-exi-gro-sta-ins}$.}
	The proof of Proposition $\ref{prop-exi-gro-sta-ins}$ follows immediately from Lemmas $\ref{lem-no-emp-M-ome}$, $\ref{lem-inc-M-G}$ and $\ref{lem-inc-G-M}$.
	\hfill $\Box$
	
	\section{Strong instability of standing waves}
	\label{S:2}
	We are now able to study the strong instability of standing waves for \eqref{mas-res-Syst}. Note that the local well-posedness in $H^1 \times H^1$ for \eqref{mas-res-Syst} in 5D is given in Proposition $\ref{prop-lwp-wor}$. Let us start with the following so-called Pohozaev's identities.
	\begin{lemma} \label{lem-poh-ide}
		Let $d=5$, $\kappa=\frac{1}{2}$ and $\omega>0$. Let $(\phi_\omega,\psi_\omega) \in H^1 \times H^1$ be a solution to \eqref{ell-equ}. Then the following identities hold
		\[
		2 K(\phi_\omega, \psi_\omega) = 5 P(\phi_\omega, \psi_\omega), \quad 2\omega M(\phi_\omega, \psi_\omega) = P(\phi_\omega,\psi_\omega).
		\]
	\end{lemma}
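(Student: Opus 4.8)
The plan is to combine two scalar identities obtained by testing the elliptic system \eqref{ell-equ} against two different multipliers: the solution itself, which yields a Nehari-type relation, and the dilation field $x\cdot\nabla$, which yields a Pohozaev-type relation. Once both are in hand, the two stated identities follow by solving a $2\times 2$ linear system in the unknowns $K(\phi_\omega,\psi_\omega)$ and $\omega M(\phi_\omega,\psi_\omega)$, whose right-hand side is proportional to $P(\phi_\omega,\psi_\omega)$.

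First I would test the first equation of \eqref{ell-equ} against $\overline{\phi}_\omega$ and the second against $\overline{\psi}_\omega$, integrate, and take real parts. Using $\mathrm{Re}\int \psi_\omega \overline{\phi}_\omega^2 = \mathrm{Re}\int \overline{\psi}_\omega \phi_\omega^2 = P(\phi_\omega,\psi_\omega)$, the first test gives $\|\nabla\phi_\omega\|_{L^2}^2 + \omega\|\phi_\omega\|_{L^2}^2 = 2P(\phi_\omega,\psi_\omega)$ and the second gives $\tfrac12\|\nabla\psi_\omega\|_{L^2}^2 + 2\omega\|\psi_\omega\|_{L^2}^2 = P(\phi_\omega,\psi_\omega)$. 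Summing these produces the Nehari identity
\[
K(\phi_\omega,\psi_\omega) + \omega M(\phi_\omega,\psi_\omega) = 3 P(\phi_\omega,\psi_\omega),
\]
which is precisely $K_\omega(\phi_\omega,\psi_\omega)=0$ and is consistent with $(\phi_\omega,\psi_\omega)\in\mathcal{A}_\omega$.

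Next I would derive the Pohozaev identity by testing the two equations against $x\cdot\nabla\overline{\phi}_\omega$ and $x\cdot\nabla\overline{\psi}_\omega$ respectively, and summing. For the Laplacian terms the standard computation gives $\mathrm{Re}\int(-\Delta\phi_\omega)(x\cdot\nabla\overline{\phi}_\omega) = \tfrac{2-d}{2}\|\nabla\phi_\omega\|_{L^2}^2$, and for the mass terms $\mathrm{Re}\int \phi_\omega\,(x\cdot\nabla\overline{\phi}_\omega) = -\tfrac{d}{2}\|\phi_\omega\|_{L^2}^2$, with the analogous formulas for $\psi_\omega$. The crucial point is the nonlinear side: writing $\overline{\phi}_\omega(x\cdot\nabla\overline{\phi}_\omega)=\tfrac12 x\cdot\nabla(\overline{\phi}_\omega^2)$ and using the product rule, the two coupling contributions combine into the total derivative $\mathrm{Re}\int x\cdot\nabla(\overline{\psi}_\omega\phi_\omega^2) = -d\,P(\phi_\omega,\psi_\omega)$. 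Collecting the $K$ and $M$ pieces then yields
\[
\frac{2-d}{2}K(\phi_\omega,\psi_\omega) - \frac{d\omega}{2}M(\phi_\omega,\psi_\omega) = -d\,P(\phi_\omega,\psi_\omega),
\]
which for $d=5$ reads $3K(\phi_\omega,\psi_\omega) + 5\omega M(\phi_\omega,\psi_\omega) = 10 P(\phi_\omega,\psi_\omega)$. Subtracting three times the Nehari identity gives $2\omega M(\phi_\omega,\psi_\omega) = P(\phi_\omega,\psi_\omega)$, and feeding this back into the Nehari identity gives $2K(\phi_\omega,\psi_\omega) = 5P(\phi_\omega,\psi_\omega)$, as claimed.

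The step I expect to be the main obstacle is the rigorous justification of the dilation identity, since $x\cdot\nabla\overline{\phi}_\omega$ and $x\cdot\nabla\overline{\psi}_\omega$ are not \emph{a priori} admissible test functions in $H^1$ and the integrations by parts involve the unbounded weight $x$. I would address this by first upgrading the regularity and decay of any $H^1\times H^1$ solution of \eqref{ell-equ} through elliptic bootstrap: the quadratic nonlinearity is energy-subcritical in $d=5$ (the power $2$ lies below the $H^1$-critical exponent $\tfrac{d+2}{d-2}=\tfrac73$), so $(\phi_\omega,\psi_\omega)$ is smooth and decays together with its derivatives, which makes the pairing $\langle S'_\omega(\phi_\omega,\psi_\omega),(x\cdot\nabla\phi_\omega, x\cdot\nabla\psi_\omega)\rangle$ well defined. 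Equivalently, one may run the multiplier computation against a cutoff $\chi(x/R)\,x$ and let $R\to\infty$, the error terms vanishing by the decay estimates; or, most cleanly, one differentiates $\lambda\mapsto S_\omega(\phi_\omega(\lambda\,\cdot),\psi_\omega(\lambda\,\cdot))$ at $\lambda=1$ and uses that $(\phi_\omega,\psi_\omega)$ is a critical point of $S_\omega$, which reproduces the same relation.
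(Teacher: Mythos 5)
Your proof is correct and takes essentially the same route as the paper: both derive the Nehari identity $K(\phi_\omega,\psi_\omega)+\omega M(\phi_\omega,\psi_\omega)=3P(\phi_\omega,\psi_\omega)$ by testing against $(\overline{\phi}_\omega,\overline{\psi}_\omega)$ and the Pohozaev identity $\tfrac{3}{2}K(\phi_\omega,\psi_\omega)+\tfrac{5}{2}\omega M(\phi_\omega,\psi_\omega)=5P(\phi_\omega,\psi_\omega)$ by testing against the dilation multipliers $x\cdot\nabla\overline{\phi}_\omega$, $x\cdot\nabla\overline{\psi}_\omega$, and then solve the resulting $2\times 2$ linear system; incidentally, your version of the Nehari relation is the correct one, since the paper's display \eqref{poh-ide-pro-1} misprints the coefficient of $\omega M$ as $2$ even though the computations preceding it (and the lemma's conclusion) require coefficient $1$. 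Your closing remarks on justifying the dilation identity rigorously (bootstrap regularity and decay, truncated multiplier $\chi(x/R)\,x$, or differentiating the scaled action) supply exactly the detail the paper waves at with ``a standard approximation argument,'' and are consistent with the regularity the paper later establishes in Lemma \ref{lem-dec-pro-gro-sta}.
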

	
	\begin{proof}
		We only make a formal calculation. The rigorous proof follows from a standard approximation argument. Multiplying both sides of the first equation in \eqref{ell-equ} with $\overline{\phi}_\omega$, integrating over $\mathbb{R}^5$ and taking the real part, we have
		\[
		\|\nabla \phi_\omega\|^2_{L^2} + \omega \|\phi_\omega\|^2_{L^2}= 2 \text{Re} \int \overline{\psi}_\omega \phi_\omega^2 dx.
		\]
		Multiplying both sides of the second equation in \eqref{ell-equ} with $\overline{\psi}_\omega$, integrating over $\mathbb{R}^5$ and taking the real part, we get
		\[
		\frac{1}{2} \|\nabla \psi_\omega\|^2_{L^2} + 2 \omega \|\psi_\omega\|^2_{L^2} = \text{Re} \int \overline{\psi}_\omega \phi_\omega^2 dx.
		\]
		We thus obtain
		\begin{align} \label{poh-ide-pro-1}
		K(\phi_\omega,\psi_\omega) + 2 \omega M(\phi_\omega,\psi_\omega) = 3 P(\phi_\omega,\psi_\omega).
		\end{align}
		Multiplying both sides of the first equation in \eqref{ell-equ} with $x \cdot \nabla \overline{\phi}_\omega$, integrating over $\mathbb{R}^5$ and taking the real part, we see that
		\[
		-\text{Re} \int \Delta \phi_\omega x \cdot \nabla \overline{\phi}_\omega dx + \omega \text{Re} \int \phi_\omega x \cdot \nabla \overline{\phi}_\omega dx = 2 \text{Re} \int \psi_\omega \overline{\phi}_\omega x \cdot \nabla \overline{\phi}_\omega dx.
		\]
		A direct computation shows that
		\begin{align*}
		\text{Re} \int \Delta\phi_\omega x \cdot \nabla \overline{\phi}_\omega dx &=\frac{3}{2} \|\nabla \phi_\omega\|^2_{L^2}, \\
		\text{Re} \int \phi_\omega x \cdot \nabla \overline{\phi}_\omega dx &= -\frac{5}{2} \|\phi_\omega\|^2_{L^2}, \\
		\text{Re} \int \psi_\omega \overline{\phi}_\omega x \cdot \nabla \overline{\phi}_\omega dx &= -\frac{5}{2} \text{Re} \int \overline{\psi}_\omega (\phi_\omega)^2 dx - \frac{1}{2} \text{Re} \int \phi_\omega^2  x\cdot \nabla \overline{\psi}_\omega dx.
		\end{align*}
		It follows that
		\[
		-\frac{3}{2} \|\nabla \phi_\omega\|^2_{L^2} - \frac{5}{2} \omega \|\phi_\omega\|^2_{L^2} = - 5 \text{Re} \int \overline{\psi}_\omega \phi^2_\omega dx - \text{Re} \int \phi^2_\omega x \cdot \nabla \overline{\psi}_\omega dx.
		\]
		Similarly, multiplying both sides of the second equation in \eqref{ell-equ} with $x \cdot \nabla \overline{\psi}_\omega$, integrating over $\mathbb{R}^5$ and taking the real part, we have
		\[
		-\frac{3}{4} \|\nabla \psi_\omega\|^2_{L^2} - 5 \omega \|\psi_\omega\|^2_{L^2} = \text{Re} \int \phi^2_\omega x\cdot \nabla \overline{\psi}_\omega dx.
		\]
		We thus get
		\begin{align} \label{poh-ide-pro-2}
		\frac{3}{2} K(\phi_\omega,\psi_\omega) + \frac{5}{2} \omega M(\phi_\omega,\psi_\omega) = 5 P(\phi_\omega,\psi_\omega).
		\end{align}
		Combining \eqref{poh-ide-pro-1} and \eqref{poh-ide-pro-2}, we prove the result.
	\end{proof}
	
	We also have the following exponential decay of solutions to \eqref{ell-equ}.
	\begin{lemma} \label{lem-dec-pro-gro-sta}
		Let $d=5$, $\kappa=\frac{1}{2}$ and $\omega>0$. Let $(\phi_\omega,\psi_\omega) \in H^1 \times H^1$ be a solution to \eqref{ell-equ}. Then the following properties hold
		\begin{itemize}
			\item $(\phi_\omega,\psi_\omega) \in W^{3,p} \times W^{3,p}$ for every $2 \leq p <\infty$. In particular, $(\phi_\omega,\psi_\omega) \in C^2 \times C^2$ and $|D^\beta \phi_\omega(x)| + |D^\beta \psi_\omega (x)| \rightarrow 0$ as $|x| \rightarrow \infty$ for all $|\beta| \leq 2$;
			\item 
			\[
			\int e^{|x|} (|\nabla \phi_\omega|^2 + |\phi_\omega|^2) dx <\infty, \quad \int e^{|x|} (|\nabla \psi_\omega|^2 + 4|\psi_\omega|^2) dx <\infty.
			\]
			In particular, $(|x| \phi_\omega, |x| \psi_\omega) \in L^2 \times L^2$.
		\end{itemize}
	\end{lemma}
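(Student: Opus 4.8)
The plan is to prove the two properties in the order stated, since the exponential-decay estimate relies on the decay of $(\phi_\omega,\psi_\omega)$ and its derivatives at infinity supplied by the regularity part. First I would rewrite \eqref{ell-equ} as $(-\Delta+\omega)\phi_\omega = 2\psi_\omega\overline{\phi}_\omega$ and $(-\Delta+4\omega)\psi_\omega = 2\phi_\omega^2$, and regard $-\Delta+\omega$ and $-\Delta+4\omega$ as isomorphisms $W^{2,p}\to L^p$ for $1<p<\infty$. Starting from $(\phi_\omega,\psi_\omega)\in H^1\times H^1\hookrightarrow L^q\times L^q$ for $2\le q\le 10/3$ (the endpoint being the Sobolev exponent $2^*=10/3$ in $d=5$), the quadratic right-hand sides lie in $L^{q/2}$ by H\"older; elliptic regularity places $(\phi_\omega,\psi_\omega)$ in $W^{2,q/2}$, and the Sobolev embedding then raises the integrability exponent. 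Iterating this bootstrap, with attention to the borderline step where $2p=d=5$ (which I would cross at the cost of an arbitrarily small loss of exponent), pushes $(\phi_\omega,\psi_\omega)$ into $L^p$ for every $p<\infty$, hence into $W^{2,p}$ for every $p<\infty$. Differentiating the equations once, the right-hand sides then lie in $W^{1,p}$ (products of $W^{2,p}\hookrightarrow C^1$ functions with their gradients), so $(\phi_\omega,\psi_\omega)\in W^{3,p}$ for every $2\le p<\infty$. Finally the embedding $W^{3,p}\hookrightarrow C^2_0(\mathbb{R}^5)$ for $p$ large yields the $C^2$ regularity together with $|D^\beta\phi_\omega(x)|+|D^\beta\psi_\omega(x)|\to 0$ as $|x|\to\infty$ for $|\beta|\le 2$.

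For the exponential decay I would use a weighted-energy (Agmon/Kato) multiplier argument. Fix $0<a<\sqrt{\omega}$ and let $\rho_\varepsilon$ be a family of bounded smooth functions increasing to $e^{a|x|}$ and satisfying $|\nabla\rho_\varepsilon|\le a\rho_\varepsilon$ pointwise (for instance $\rho_\varepsilon=e^{a|x|/(1+\varepsilon|x|)}$). Multiplying the $\phi_\omega$-equation by $\rho_\varepsilon^2\overline{\phi}_\omega$, integrating over $\mathbb{R}^5$ and taking real parts, a standard integration by parts gives an identity whose leading terms are $\int\rho_\varepsilon^2|\nabla\phi_\omega|^2+\omega\int\rho_\varepsilon^2|\phi_\omega|^2$ on the left and $\int|\nabla\rho_\varepsilon|^2|\phi_\omega|^2+2\,\mathrm{Re}\int\rho_\varepsilon^2\psi_\omega\overline{\phi}_\omega^2$ on the right, modulo lower-order weight contributions from $\Delta\rho_\varepsilon$. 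The first right-hand term is bounded by $a^2\int\rho_\varepsilon^2|\phi_\omega|^2$ and, since $a^2<\omega$, is absorbed into the mass term on the left; for the nonlinear term I use that $|\psi_\omega(x)|\to 0$ (from the first property), so that on $|x|\ge R$ the factor $2|\psi_\omega|$ is below any prescribed $\delta$ and its contribution is again absorbed, while on $|x|\le R$ the weight $\rho_\varepsilon$ is bounded and the integral is finite. This yields a bound on $\int\rho_\varepsilon^2(|\nabla\phi_\omega|^2+|\phi_\omega|^2)$ that is uniform in $\varepsilon$, and letting $\varepsilon\to 0$ by monotone convergence gives $\int e^{2a|x|}(|\nabla\phi_\omega|^2+|\phi_\omega|^2)\,dx<\infty$. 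Applying the same scheme to the $\psi_\omega$-equation, whose effective mass is $4\omega$, admits weights $e^{2b|x|}$ for any $b<2\sqrt{\omega}$; taking the weights at least as large as $e^{|x|}$ produces the two displayed integrals, and in particular $(|x|\phi_\omega,|x|\psi_\omega)\in L^2\times L^2$ because $|x|^2\lesssim e^{|x|}$.

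The main obstacle is this second step: one must rigorously justify testing the equations against an unbounded exponential weight, which is precisely what forces the $\varepsilon$-truncation and a careful accounting of the lower-order terms arising from $\Delta\rho_\varepsilon$ (notably the $|x|^{-1}$ contribution in $\Delta e^{a|x|}$, which is harmless for large $|x|$ and near the origin is controlled using the boundedness of $\rho_\varepsilon$ there). Crucially, the absorption of the quadratic terms depends entirely on the decay $|\phi_\omega|,|\psi_\omega|\to 0$ from the first property, which is why the two parts must be done in this order. A secondary point is that the admissible exponential rate is governed by $\sqrt{\omega}$ for $\phi_\omega$ and by $2\sqrt{\omega}$ for $\psi_\omega$; the weights are chosen with $a,b$ as close to these rates as needed to secure the stated integrals. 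As an alternative to the integral argument, one could instead derive pointwise bounds $|\phi_\omega(x)|\lesssim e^{-c|x|}$ via Kato's inequality $\Delta|\phi_\omega|\ge \mathrm{Re}\big(\mathrm{sgn}(\overline{\phi}_\omega)\Delta\phi_\omega\big)$ and a comparison-principle argument against a radial supersolution, and likewise for $\psi_\omega$, from which the weighted integrals and $(|x|\phi_\omega,|x|\psi_\omega)\in L^2$ follow immediately.
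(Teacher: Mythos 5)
Your proposal is correct and follows essentially the same route as the paper: the identical elliptic bootstrap (quadratic right-hand sides in $L^{p/2}$, hence $W^{2,p/2}$, iterated via Sobolev embedding up to $W^{3,p}$ and thus $C^2$ with decay at infinity), followed by the same Cazenave-style truncated exponential multiplier $e^{|x|/(1+\epsilon|x|)}$, with the quadratic term absorbed using the decay of $\psi_\omega$ from the first part and the truncation removed by letting $\epsilon\to 0$. The only differences are bookkeeping — you test with $\rho_\epsilon^2\overline{\phi}_\omega$ and absorb $|\nabla\rho_\epsilon|^2\leq a^2\rho_\epsilon^2$ using $a^2<\omega$, whereas the paper tests with $\chi_\epsilon\overline{\phi}_\omega$ and absorbs via Cauchy--Schwarz — and your explicit tracking of the admissible rates $a<\sqrt{\omega}$, $b<2\sqrt{\omega}$ is in fact slightly more careful than the paper, which tacitly normalizes the frequency by dropping the factor $\omega$ in front of $\int\chi_\epsilon|\phi_\omega|^2\,dx$.
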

	
	\begin{proof}
		The follows the argument of \cite[Theorem 8.1.1]{Cazenave}. Let us prove the first item. We note that if $(\phi_\omega, \psi_\omega) \in L^p \times L^p$ for some $2 \leq p<\infty$, then $\psi_\omega \overline{\phi}_\omega, \phi^2_\omega \in L^{\frac{p}{2}}$. It follows that $(\phi_\omega, \psi_\omega) \in W^{2,\frac{p}{2}} \times W^{2,\frac{p}{2}}$. By Sobolev embedding, we see that 
		\begin{align} \label{dec-pro-pro-1}
		(\phi_\omega,\psi_\omega) \in L^q \times L^q \text{ for some } q \geq \frac{p}{2} \text{ satisfying } \frac{1}{q} \geq \frac{2}{p} - \frac{2}{5}. 
		\end{align}
		We claim that $(\phi_\omega,\psi_\omega) \in L^p \times L^p$ for any $2 \leq p<\infty$. Since $(\phi_\omega,\psi_\omega) \in H^1 \times H^1$, the Sobolev embedding implies that $(\phi_\omega, \psi_\omega) \in L^p \times L^p$ for any $2 \leq p<\frac{10}{3}$. It remains to show the claim for any $p$ sufficiently large. To see it, we define the sequence
		\[
		\frac{1}{q_n} = 2^n \left( -\frac{1}{15} + \frac{2}{5 \times 2^n} \right).
		\]
		We have
		\[
		\frac{1}{q_{n+1}} -\frac{1}{q_n} = -\frac{1}{15} \times 2^n <0.
		\]
		This implies that $\frac{1}{q_n}$ is decreasing and $\frac{1}{q_n} \rightarrow -\infty$ as $n\rightarrow \infty$. Since $q_0= 3$ (we take $(\phi_\omega, \psi_\omega) \in L^3 \times L^3$ to prove our claim), it follows that there exists $k \geq 0$ such that
		\[
		\frac{1}{q_n} >0 \text{ for } 0 \leq n \leq k \text{ and } \frac{1}{q_{n+1}} \leq 0.
		\]
		We will show that $(\phi_\omega, \psi_\omega) \in L^{q_k} \times L^{q_k}$. If $(\phi_\omega, \psi_\omega) \in L^{q_{n_0}} \times L^{q_n}$ for some $0 \leq n_0 \leq k-1$, then by \eqref{dec-pro-pro-1}, $(\phi_\omega,\psi_\omega) \in L^q \times L^q$ for some $q \geq \frac{q_{n_0}}{2}$ satisfying $\frac{1}{q} \geq \frac{2}{q_{n_0}} - \frac{2}{5}$. By the choice of $q_n$, it is easy to check that $\frac{2}{q_{n_0}} - \frac{2}{5} = \frac{2}{q_{n_0+1}}$. In particular, $(\phi_\omega,\psi_\omega) \in L^{q_{n_0+1}} \times L^{q_{n_0+1}}$. By induction, we prove $(\phi_\omega, \psi_\omega) \in L^{q_k} \times L^{q_k}$. Applying again \eqref{dec-pro-pro-1}, we have
		\[
		(\phi_\omega,\psi_\omega) \in L^q \times L^q \text{ for all } q \geq \frac{q_k}{2} \text{ such that } \frac{1}{q} \geq \frac{1}{q_{k+1}}. 
		\]
		This shows that $(\phi_\omega, \psi_\omega)$ belongs to $L^p \times L^p$ for any $p$ sufficiently large. The claim follows. Using the claim, we have in particular $\psi_\omega \overline{\phi}_\omega, \phi^2_\omega \in L^p$ for any $2 \leq p<\infty$. Hence $(\phi_\omega, \psi_\omega) \in W^{2,p} \times W^{2,p}$ for any $2 \leq p<\infty$. By H\"older's inequality, we see that $\partial_j(\psi_\omega \overline{\phi}_\omega), \partial_j(\phi^2_\omega) \in L^p$ for any $2 \leq p<\infty$ and any $ 1\leq j \leq 5$. Thus $(\partial_j \phi_\omega, \partial_j \psi_\omega) \in W^{2,p} \times W^{2,p}$ for any $2 \leq p<\infty$ and any $1 \leq j \leq 5$, or $(\phi_\omega,\psi_\omega) \in W^{3,p} \times W^{3,p}$ for any $2 \leq p<\infty$. By Sobolev embedding, $(\phi_\omega,\psi_\omega) \in C^{2,\delta} \times C^{2,\delta}$ for all $0<\delta <1$. In particular, $|D^\beta \phi_\omega(x)| + |D^\beta \psi_\omega(x)| \rightarrow 0$ as $|x| \rightarrow \infty$ for all $|\beta| \leq 2$.
		
		To see the second item. Let $\epsilon>0$ and set $\chi_\epsilon(x) := e^{\frac{|x|}{1+\epsilon |x|}}$. For each $\epsilon>0$, the function $\chi_\epsilon$ is bounded, Lipschitz continuous and satisfies $|\nabla \chi_\epsilon| \leq \chi_\epsilon$ a.e. Multiplying both sides of the first equation in \eqref{ell-equ} by $\chi_\epsilon \overline{\phi}_\omega$, integrating over $\mathbb{R}^5$ and taking the real part, we have
		\[
		\text{Re} \int \nabla \phi_\omega \cdot \nabla (\chi_\omega \overline{\phi}_\omega) dx + \int \chi_\epsilon |\phi_\omega|^2 dx = 2 \text{Re} \int \chi_\epsilon \psi_\omega \overline{\phi}^2_\omega dx.
		\]
		Since $\nabla(\chi_\epsilon \overline{\phi}_\omega) = \chi_\epsilon \nabla \overline{\phi}_\omega + \nabla \chi_\epsilon \overline{\phi}_\omega$, the Cauchy-Schwarz inequality implies that
		\begin{align*}
		\text{Re} \int \nabla \phi_\omega \cdot \nabla (\chi_\epsilon \overline{\phi}_\omega) dx &= \int \chi_\epsilon |\nabla \phi_\omega|^2 dx + \text{Re} \int \nabla \chi_\epsilon \nabla \phi_\omega \overline{\phi}_\omega dx \\
		&\geq \int \chi_\epsilon |\nabla \phi_\omega|^2 dx - \int |\nabla \chi_\epsilon| |\nabla \phi_\omega| |\phi_\omega| dx \\
		& \geq \int \chi_\epsilon |\nabla \phi_\omega|^2 dx - \frac{1}{2} \int \chi_\epsilon (|\nabla \phi_\omega|^2 + |\phi_\omega|^2) dx.
		\end{align*}
		We thus get
		\begin{align} \label{dec-pro-pro-2}
		\int \chi_\epsilon (|\nabla \phi_\omega|^2 + |\phi_\omega|^2) dx \leq 4 \text{Re} \int \chi_\epsilon \psi_\omega \overline{\phi}^2_\omega dx.
		\end{align}
		Similarly, multiplying both sides of the second equation in \eqref{ell-equ} with $\chi_\epsilon \overline{\psi}_\omega$, integrating over $\mathbb{R}^5$ and taking the real part, we get
		\begin{align} \label{dec-pro-pro-3}
		\int \chi_\epsilon(|\nabla \psi_\omega|^2 + 4 |\psi_\omega|^2) dx \leq \frac{8}{3} \text{Re} \int \chi_\epsilon \overline{\psi}_\omega \phi^2_\omega dx.
		\end{align}
		By the first item, there exists $R>0$ large enough such that $|v(x)| \leq \frac{1}{8}$ for $|x| \geq R$. We have that
		\begin{align*}
		4 \text{Re} \int \chi_\epsilon \psi_\omega \overline{\phi}^2_\omega dx & \leq 4 \int \chi_\epsilon |\psi_\omega| |\phi_\omega|^2 dx \\
		&=4 \int_{|x| \leq R} \chi_\epsilon |\psi_\omega| |\phi_\omega|^2 dx + \int_{|x| \geq R} \chi_\epsilon |\psi_\omega| |\phi_\omega|^2 dx \\
		&\leq 4 \int_{|x| \leq R} e^{|x|} |\psi_\omega| |\phi_\omega|^2 dx + \frac{1}{2} \int \chi_\epsilon |\phi_\omega|^2 dx.
		\end{align*}
		We thus get from \eqref{dec-pro-pro-2} that
		\begin{align} \label{dec-pro-pro-4}
		\int \chi_\epsilon (|\nabla \phi_\omega|^2 + |\phi_\omega|^2) dx \leq 8 \int_{|x| \leq R} e^{|x|} |\psi_\omega||\phi_\omega|^2 dx.
		\end{align}
		Letting $\epsilon \rightarrow 0$, we obtain
		\[
		\int e^{|x|}( |\nabla \phi_\omega|^2 + |\phi_\omega|^2) dx \leq 8 \int_{|x| \leq R} e^{|x|} |\psi_\omega||\phi_\omega|^2 dx <\infty.
		\]
		Similarly, by \eqref{dec-pro-pro-3} and \eqref{dec-pro-pro-4},
		\begin{align*}
		\int \chi_\epsilon (|\nabla \psi_\omega|^2 + 4 |\psi_\omega|^2) dx &\leq \frac{2}{3} \left(4 \int_{|x| \leq R} e^{|x|} |\psi_\omega| |\phi_\omega|^2 dx + \frac{1}{2} \int \chi_\epsilon |\phi_\omega|^2 dx \right) \\
		&\leq \frac{16}{3} \int_{|x| \leq R} e^{|x|} |\psi_\omega| |\phi_\omega|^2 dx.
		\end{align*}
		Letting $\epsilon \rightarrow 0$, we get
		\[
		\int e^{|x|}( |\nabla \psi_\omega|^2 + 4|\psi_\omega|^2) dx \leq \frac{16}{3} \int_{|x| \leq R} e^{|x|} |\psi_\omega||\phi_\omega|^2 dx <\infty.
		\]
		The proof is complete.
	\end{proof}
	
	We also need the following virial identity related to \eqref{mas-res-Syst}. 
	\begin{lemma} \label{lem-vir-ide-ins}
		Let $d=5$ and $\kappa=\frac{1}{2}$. Let $(u_0,v_0) \in H^1 \times H^1$ be such that $(|x|u_0, |x| v_0) \in L^2 \times L^2$. Then the corresponding solution to \eqref{mas-res-Syst} with initial data $(u(0),v(0)) = (u_0,v_0)$ satisfies
		\begin{align*} 
		\frac{d^2}{dt^2} (\|xu(t)\|^2_{L^2} + 2 \|xv(t)\|^2_{L^2}) = 8 \left(\|\nabla u(t)\|^2_{L^2} + \frac{1}{2} \|\nabla v(t)\|^2_{L^2}\right) - 20 \emph{Re} \int \overline{v}(t) u^2(t) dx.
		\end{align*}
	\end{lemma}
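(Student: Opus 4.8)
The plan is to compute the second time derivative of the weighted quantity
\[
V(t) := \|xu(t)\|_{L^2}^2 + 2\|xv(t)\|_{L^2}^2 = \int |x|^2\left(|u(t)|^2 + 2|v(t)|^2\right)dx
\]
directly from the system \eqref{mas-res-Syst} and to show that everything reorganises into the asserted right-hand side. I would carry out the algebra at a formal level first and then recover the statement through a standard approximation argument; the justification of the latter is the step I expect to require the most care.

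\emph{First derivative.} Using the two equations in the form $\partial_t u = i(\Delta u + 2v\overline u)$ and $\partial_t v = i(\tfrac12\Delta v + u^2)$, I would compute
\[
\partial_t|u|^2 = -2\,\mathrm{Im}(\overline u\,\Delta u) - 4\,\mathrm{Im}(v\overline u^2), \quad 2\partial_t|v|^2 = -2\,\mathrm{Im}(\overline v\,\Delta v) - 4\,\mathrm{Im}(\overline v u^2).
\]
The crucial point is that the two nonlinear source terms cancel, since $\mathrm{Im}(\overline v u^2) = -\mathrm{Im}(v\overline u^2)$; this is exactly where the weight $2$ in front of $|v|^2$ and the quadratic structure of the interaction enter, and it reflects the conservation of $M(u,v)$. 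Hence $\partial_t(|u|^2 + 2|v|^2) = -2\nabla\cdot\mathrm{Im}(\overline u\nabla u + \overline v\nabla v)$, and integrating $|x|^2$ against this continuity equation and integrating by parts yields
\[
V'(t) = 4\int x\cdot\mathrm{Im}\left(\overline u\nabla u + \overline v\nabla v\right)dx.
\]

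\emph{Second derivative.} Differentiating $V'$ and substituting the equations once more, I would split the result into a linear (Laplacian) contribution and a nonlinear (source) contribution. The linear parts are exactly the classical virial momentum-flux computations for the two operators $\Delta$ and $\tfrac12\Delta$; after integration by parts each returns its kinetic energy, giving $8\|\nabla u\|_{L^2}^2 + 4\|\nabla v\|_{L^2}^2 = 8(\|\nabla u\|_{L^2}^2 + \tfrac12\|\nabla v\|_{L^2}^2)$. For the nonlinear contribution, the terms coming from the sources $-2v\overline u$ and $-u^2$ combine, after an integration by parts and the use of $u\nabla u = \tfrac12\nabla(u^2)$, into
\[
4\int x\cdot\left[\mathrm{Re}(\overline{u^2}\,\nabla v) + \mathrm{Re}(\overline v\,\nabla(u^2))\right]dx.
\]
The key algebraic identity is $\mathrm{Re}(\overline{u^2}\nabla v) + \mathrm{Re}(\overline v\nabla(u^2)) = \nabla\,\mathrm{Re}(\overline v u^2)$, so that one final integration by parts against $x$ produces the factor $-d = -5$ and hence $-20\,\mathrm{Re}\int\overline v u^2\,dx$. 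Collecting the two contributions gives the claimed identity, with the coefficient $-20 = -4d$ visibly encoding the dimension.

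\emph{Main obstacle.} The computation above presupposes that $V(t)$ is twice differentiable and that differentiation under the integral sign and the integrations by parts are licit; for solutions that are merely in $H^1\times H^1$ with $(|x|u_0,|x|v_0)\in L^2\times L^2$ this is not immediate. I expect the genuine difficulty to be precisely this justification: one regularises the initial data (for instance by Schwartz approximation) or inserts a cutoff weight $|x|^2\varphi(x/R)$, derives the identity for the regularised problem where all quantities are smooth and decaying, and then passes to the limit using the persistence of the weighted regularity and the continuous dependence of the flow, as in \cite{HOT, Cazenave}. Once the cancellation in the first derivative and the gradient identity in the nonlinear term are in place, the remaining algebra is routine.
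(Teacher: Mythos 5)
Your computation is correct, and it takes a genuinely more self-contained route than the paper: the paper disposes of this lemma in a single line by invoking the localized virial identity of \cite[Lemma 3.1]{Dinh} with weight $\chi(x)=|x|^2$, whereas you rederive the identity from scratch. Every point where the structure of the system enters checks out in your sketch: the cancellation $\mathrm{Im}(v\overline{u}^2)=-\mathrm{Im}(\overline{v}u^2)$ of the source terms in $\partial_t(|u|^2+2|v|^2)$ (which is exactly why the weight $2$ on $\|xv\|_{L^2}^2$ is forced, mirroring the conserved mass), the fact that the momentum densities combine into $\mathrm{Im}(\overline{u}\nabla u+\overline{v}\nabla v)$ only because $2\kappa=1$ under mass resonance, the linear contribution $8\|\nabla u\|_{L^2}^2+4\|\nabla v\|_{L^2}^2=8K(u,v)$, and the nonlinear contribution $-4dP(u,v)=-20\,\mathrm{Re}\int\overline{v}u^2\,dx$ via the identity $\mathrm{Re}(\overline{u^2}\nabla v)+\mathrm{Re}(\overline{v}\nabla(u^2))=\nabla\,\mathrm{Re}(\overline{v}u^2)$. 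One bookkeeping remark: depending on how the integrations by parts are ordered, the nonlinear part of $V''$ also produces zeroth-order terms proportional to $\int\mathrm{Re}(\overline{v}u^2)\,dx$ in addition to the $x\cdot\nabla$ terms, so your displayed combination is best read as the \emph{net} nonlinear contribution --- with which it agrees, since $\int x\cdot\nabla\,\mathrm{Re}(\overline{v}u^2)\,dx=-5\int\mathrm{Re}(\overline{v}u^2)\,dx$ --- rather than what appears term by term; this is a presentational point, not a gap. Finally, the rigor issue you flag is real and is precisely what the cited lemma in \cite{Dinh} is engineered to handle, by proving the identity for a general truncated weight $\chi$ and passing to the limit; your alternative (regularizing the data and using persistence of the weighted norm together with continuous dependence) achieves the same end. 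In short, your route buys independence from the companion paper at the cost of writing out the approximation argument that the paper's citation subsumes.
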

	\begin{proof}
		The above identity follows immediately from \cite[Lemma 3.1]{Dinh} with $\chi(x) = |x|^2$. 
	\end{proof}
		
	Now let us denote for $(u,v) \in H^1 \times H^1 \backslash \{(0,0)\}$,
	\[
	Q(u,v) := K(u,v) - \frac{5}{2} P(u,v).
	\]
	It is obvious that
	\begin{align} \label{vir-ide-ins}
	\frac{d^2}{dt^2} (\|xu(t)\|^2_{L^2} + 2 \|xv(t)\|^2_{L^2}) = 8 Q(u(t),v(t)).
	\end{align}
	Note that if we take
	\begin{align} \label{scaling}
	u^\gamma(x) = \gamma^{\frac{5}{2}} u (\gamma x), \quad v^\gamma(x) = \gamma^{\frac{5}{2}} v(\gamma x),
	\end{align}
	then 
	\begin{align*}
	S_\omega(u^\gamma, v^\gamma) &= \frac{1}{2} K(u^\gamma,v^\gamma) + \frac{\omega}{2} M(u^\gamma,v^\gamma) - P(u^\gamma,v^\gamma) \\
	&=\frac{\gamma^2}{2} K(u,v) + \frac{\omega}{2} M(u,v) - \gamma^{\frac{5}{2}} P(u,v).
	\end{align*}
	It is easy to see that
	\[
	Q(u,v) = \left. \partial_\gamma S_\omega(u^\gamma, v^\gamma) \right|_{\gamma=1}.
	\]
	
	\begin{lemma} \label{lem-cha-gro-sta-5D}
		Let $d=5$, $\kappa=\frac{1}{2}$ and $\omega>0$. Let $(\phi_\omega,\psi_\omega) \in \mathcal{G}_\omega$. Then
		\[
		S_\omega(\phi_\omega,\psi_\omega) = \inf \left\{ S_\omega(u,v) \ : \ (u,v) \in H^1 \times H^1 \backslash \{(0,0)\}, Q(u,v)=0 \right\}.
		\]
	\end{lemma}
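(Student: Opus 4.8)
The plan is to identify the $Q$-constrained infimum with $d(\omega)$. Write $m_Q := \inf\{S_\omega(u,v) : (u,v)\in H^1\times H^1\setminus\{(0,0)\},\ Q(u,v)=0\}$. By Proposition~\ref{prop-exi-gro-sta-ins} every $(\phi_\omega,\psi_\omega)\in\mathcal{G}_\omega$ satisfies $S_\omega(\phi_\omega,\psi_\omega)=d(\omega)$, so the claim reduces to $d(\omega)=m_Q$. For the bound $m_Q\le d(\omega)$, I would use that a ground state solves \eqref{ell-equ}, so the Pohozaev identities of Lemma~\ref{lem-poh-ide} give $2K(\phi_\omega,\psi_\omega)=5P(\phi_\omega,\psi_\omega)$, that is $Q(\phi_\omega,\psi_\omega)=K(\phi_\omega,\psi_\omega)-\tfrac52 P(\phi_\omega,\psi_\omega)=0$. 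Hence $(\phi_\omega,\psi_\omega)$ is admissible for $m_Q$ and $m_Q\le S_\omega(\phi_\omega,\psi_\omega)=d(\omega)$.

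For the reverse inequality $m_Q\ge d(\omega)$, I would fix an arbitrary $(u,v)\neq(0,0)$ with $Q(u,v)=0$ and show $S_\omega(u,v)\ge d(\omega)$ by a scaling argument along \eqref{scaling}. First, $Q(u,v)=0$ means $K(u,v)=\tfrac52 P(u,v)$; since $K(u,v)=0$ would force $u$ and $v$ to be constant and hence to vanish in $H^1\times H^1(\mathbb{R}^5)$, we have $K(u,v)>0$ and thus $P(u,v)=\tfrac25 K(u,v)>0$. Now consider $f(\gamma):=S_\omega(u^\gamma,v^\gamma)=\tfrac{\gamma^2}{2}K(u,v)+\tfrac{\omega}{2}M(u,v)-\gamma^{5/2}P(u,v)$ for $\gamma>0$. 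From $f'(\gamma)=\gamma\big(K(u,v)-\tfrac52\gamma^{1/2}P(u,v)\big)=\tfrac52 P(u,v)\,\gamma\big(1-\gamma^{1/2}\big)$ the unique positive critical point is $\gamma=1$; moreover $f$ is increasing on $(0,1)$ and decreasing on $(1,\infty)$, so $\gamma=1$ is the global maximum of $f$ on $(0,\infty)$.

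Next I would cross the Nehari constraint. The function $h(\gamma):=K_\omega(u^\gamma,v^\gamma)=\gamma^2 K(u,v)+\omega M(u,v)-3\gamma^{5/2}P(u,v)$ satisfies $h(0^+)=\omega M(u,v)>0$ and $h(\gamma)\to-\infty$ as $\gamma\to\infty$ (because $P(u,v)>0$), so by the intermediate value theorem there is $\gamma_1>0$ with $K_\omega(u^{\gamma_1},v^{\gamma_1})=0$. As $(u^{\gamma_1},v^{\gamma_1})\neq(0,0)$, the definition \eqref{d-ome} of $d(\omega)$ gives $S_\omega(u^{\gamma_1},v^{\gamma_1})\ge d(\omega)$, while the global maximality from the previous step gives $S_\omega(u^{\gamma_1},v^{\gamma_1})=f(\gamma_1)\le f(1)=S_\omega(u,v)$. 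Chaining these, $d(\omega)\le f(\gamma_1)\le S_\omega(u,v)$, and taking the infimum over admissible $(u,v)$ yields $m_Q\ge d(\omega)$, completing the proof. The main obstacle is precisely this last step: one must observe that the spatial scaling \eqref{scaling}, along which the point $(u,v)$ with $Q(u,v)=0$ sits exactly at the peak of the energy profile $f$, necessarily also meets the Nehari manifold $\{K_\omega=0\}$, thereby producing a legitimate competitor for $d(\omega)$ whose energy does not exceed $S_\omega(u,v)$; verifying $P(u,v)>0$ and that $\gamma=1$ is a global, not merely local, maximum are the points requiring care.
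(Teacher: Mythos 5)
Your proof is correct and follows essentially the same route as the paper's: Pohozaev's identities (Lemma \ref{lem-poh-ide}) give $Q(\phi_\omega,\psi_\omega)=0$ for one inequality, and for the other you scale along \eqref{scaling}, use the intermediate value theorem to find $\gamma_1$ with $K_\omega(u^{\gamma_1},v^{\gamma_1})=0$, and exploit that $\gamma=1$ is the global maximum of $\gamma\mapsto S_\omega(u^\gamma,v^\gamma)$ when $Q(u,v)=0$. The only differences are cosmetic improvements: you avoid the paper's case split on $K_\omega(u,v)=0$ versus $K_\omega(u,v)\neq 0$, and you explicitly verify $P(u,v)>0$ (via $K(u,v)>0$), a point the paper uses implicitly when claiming $K_\omega(u^\gamma,v^\gamma)\to-\infty$ as $\gamma\to\infty$.
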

	
	\begin{proof}
		Denote $m:= \inf \left\{ S_\omega(u,v) \ : \ (u,v) \in H^1 \times H^1 \backslash \{(0,0)\}, Q(u,v)=0 \right\}$. Since $(\phi_\omega,\psi_\omega)$ is a solution of \eqref{ell-equ}, it follows from Lemma $\ref{lem-poh-ide}$ that $Q(\phi_\omega,\psi_\omega) =K_\omega(\phi_\omega,\psi_\omega)=0$. Thus
		\begin{align} \label{cha-gro-sta-5D-pro-1}
		S_\omega(\phi_\omega,\psi_\omega) \geq m.
		\end{align}
		Now let $(u,v) \in H^1 \times H^1 \backslash \{(0,0)\}$ be such that $Q(u,v) =0$. If $K_\omega(u,v) =0$, then by Proposition $\ref{prop-exi-gro-sta-ins}$, $S_\omega(u,v) \geq S_\omega(\phi_\omega,\psi_\omega)$. If $K_\omega(u,v) \ne 0$, we consider $K_\omega(u^\gamma, v^\gamma) = \gamma^2 K(u,v) + \omega M(u,v) - \gamma^{\frac{5}{2}} P(u,v)$, where $(u^\gamma, v^\gamma)$ is as in \eqref{scaling}. Since $\lim_{\gamma \rightarrow 0} K_\omega(u^\gamma, v^\gamma)= \omega M(u,v) >0$ and $\lim_{\gamma \rightarrow \infty} K_\omega(u^\gamma, v^\gamma) = -\infty$, there exists $\gamma_0>0$ such that $K_\omega(u^{\gamma_0},v^{\gamma_0}) =0$. It again follows from Proposition $\ref{prop-exi-gro-sta-ins}$, $S_\omega(u^{\gamma_0},v^{\gamma_0}) \geq S_\omega(\phi_\omega,\psi_\omega)$. On the other hand,
		\[
		\partial_\gamma S_\omega(u^\gamma,v^\gamma) = \gamma K(u,v) - \frac{5}{2} \gamma^{\frac{3}{2}} P(u,v).
		\]
		We see that the equation $\partial_\gamma S_\omega(u^\gamma, v^\gamma) =0$ admits a unique non-zero solution 
		\[
		\left( \frac{2K(u,v)}{5P(u,v)} \right)^2=1
		\]
		since $Q(u,v) =0$. This implies that $\partial_\gamma S_\omega(u^\gamma, v^\gamma)>0$ if $\gamma \in (0,1)$ and $\partial_\gamma S_\omega(u^\gamma,v^\gamma)<0$ if $\gamma \in (1,\infty)$. In particular, $S_\omega(u^\gamma,v^\gamma) \leq S_\omega(u,v)$ for all $\gamma >0$. Hence $S_\omega(u^{\gamma_0},v^{\gamma_0}) \leq S_\omega(u,v)$. We thus obtain $S_\omega(\phi_\omega,\psi_\omega) \leq S_\omega(u,v)$ for any $(u,v) \in H^1 \times H^1 \backslash \{(0,0)\}$ satisfying $Q(u,v)=0$. Therefore,
		\begin{align} \label{cha-gro-sta-5D-pro-2}
		S_\omega(\phi_\omega,\psi_\omega) \leq m.
		\end{align}
		Combining \eqref{cha-gro-sta-5D-pro-1} and \eqref{cha-gro-sta-5D-pro-2}, we prove the result.
	\end{proof}
	
	Let $(\phi_\omega,\psi_\omega) \in \mathcal{G}_\omega$. Define
	\[
	\mathcal{B}_\omega:= \left\{ (u,v) \in H^1 \times H^1 \backslash \{(0,0)\} \ : \ S_\omega(u,v) < S_\omega(\phi_\omega,\psi_\omega), Q(u,v) <0 \right\}.
	\]
	
	\begin{lemma} \label{lem-inv-set}
		Let $d=5$, $\kappa=\frac{1}{2}$, $\omega>0$ and $(\phi_\omega,\psi_\omega) \in \mathcal{G}_\omega$. The set $\mathcal{B}_\omega$ is invariant under the flow of \eqref{mas-res-Syst}.
	\end{lemma}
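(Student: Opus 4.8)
The plan is to combine the conservation of $S_\omega$ with the variational characterization from Lemma~\ref{lem-cha-gro-sta-5D} in a continuity/connectedness argument. Let $(u_0,v_0)\in\mathcal{B}_\omega$ and let $(u(t),v(t))$ be the corresponding solution on its maximal existence interval $I\ni 0$; by Proposition~\ref{prop-lwp-wor} this map lies in $C(I,H^1\times H^1)$, so both $t\mapsto S_\omega(u(t),v(t))$ and $t\mapsto Q(u(t),v(t))$ are continuous. Conservation of mass and energy gives $S_\omega(u(t),v(t))=S_\omega(u_0,v_0)<S_\omega(\phi_\omega,\psi_\omega)$ for every $t\in I$, so the first defining inequality of $\mathcal{B}_\omega$ is preserved automatically; the entire difficulty is to show that $Q(u(t),v(t))<0$ persists.

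The key step, and the main obstacle, is to upgrade Lemma~\ref{lem-cha-gro-sta-5D} into the pointwise gap estimate
\[
Q(u,v)\le S_\omega(u,v)-S_\omega(\phi_\omega,\psi_\omega)\qquad\text{whenever } Q(u,v)<0.
\]
To prove it I would first note that $Q(u,v)<0$ forces $(u,v)\neq(0,0)$ (since $Q(0,0)=0$), hence $K(u,v)>0$, and then $K(u,v)<\tfrac52 P(u,v)$ forces $P(u,v)>0$. Using the scaling \eqref{scaling} one has $Q(u^\gamma,v^\gamma)=\gamma^2 K(u,v)-\tfrac52\gamma^{5/2}P(u,v)$, which is positive for small $\gamma$ and vanishes exactly at $\gamma_0:=\left(\tfrac{2K(u,v)}{5P(u,v)}\right)^2$; since $Q(u,v)<0$ we get $\gamma_0\in(0,1)$. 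As $Q(u^{\gamma_0},v^{\gamma_0})=0$ and $(u^{\gamma_0},v^{\gamma_0})\neq(0,0)$, Lemma~\ref{lem-cha-gro-sta-5D} gives $S_\omega(u^{\gamma_0},v^{\gamma_0})\ge S_\omega(\phi_\omega,\psi_\omega)$. Eliminating $P$ via the relation $\gamma_0^{5/2}P(u,v)=\tfrac25\gamma_0^2 K(u,v)$ yields the closed form $S_\omega(u^{\gamma_0},v^{\gamma_0})=\tfrac{1}{10}\gamma_0^2 K(u,v)+\tfrac{\omega}{2}M(u,v)$, while a direct computation of $S_\omega(u,v)-Q(u,v)$ combined with $P(u,v)>\tfrac25 K(u,v)$ gives $S_\omega(u,v)-Q(u,v)>\tfrac{1}{10}K(u,v)+\tfrac{\omega}{2}M(u,v)$. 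Since $\gamma_0<1$ the latter strictly dominates $S_\omega(u^{\gamma_0},v^{\gamma_0})\ge S_\omega(\phi_\omega,\psi_\omega)$, and the gap estimate follows.

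Finally I would run the connectedness argument. Set $\delta:=S_\omega(\phi_\omega,\psi_\omega)-S_\omega(u_0,v_0)>0$. On the open, nonempty set $\{t\in I:Q(u(t),v(t))<0\}$ the gap estimate and conservation give $Q(u(t),v(t))\le S_\omega(u(t),v(t))-S_\omega(\phi_\omega,\psi_\omega)=-\delta$, so the continuous function $t\mapsto Q(u(t),v(t))$ never takes values in $[-\delta,0)$ along this set. Consequently it cannot approach zero from below: the set is therefore also closed in $I$, hence equals $I$ by connectedness, and $Q(u(t),v(t))\le-\delta<0$ for all $t\in I$. Together with the conserved energy inequality this shows $(u(t),v(t))\in\mathcal{B}_\omega$ throughout $I$, proving the invariance. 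The only delicate point is the gap estimate of the second paragraph; once it is established the flow cannot reach the boundary $\{Q=0\}$ (in particular it cannot pass through $(0,0)$), and the remaining topology is routine.
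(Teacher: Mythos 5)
Your proof is correct, but it reaches the invariance by a genuinely different route than the paper. The paper's argument is a short first-crossing contradiction: conservation gives $S_\omega(u(t),v(t))=S_\omega(u_0,v_0)<S_\omega(\phi_\omega,\psi_\omega)$, and if $Q$ ever became nonnegative, continuity would produce a time $t_1$ with $Q(u(t_1),v(t_1))=0$, whence Lemma \ref{lem-cha-gro-sta-5D} forces $S_\omega(u(t_1),v(t_1))\ge S_\omega(\phi_\omega,\psi_\omega)$, a contradiction; no quantitative estimate is needed at this stage. You instead prove a pointwise gap estimate $Q(u,v)< S_\omega(u,v)-S_\omega(\phi_\omega,\psi_\omega)$ on the region $\{Q<0\}$ --- essentially a constant-$1$ variant of the paper's Lemma \ref{lem-key-lem}, which the paper establishes separately (with constant $2$) by integrating the differential inequality $(\gamma f'(\gamma))'\le 2f'(\gamma)$ over $(\gamma_0,1)$, whereas you obtain it by pure algebra: eliminating $P$ at the rescaled zero $\gamma_0=\left(\frac{2K}{5P}\right)^2\in(0,1)$ of $Q(u^\gamma,v^\gamma)$ and comparing $S_\omega(u,v)-Q(u,v)>\frac{1}{10}K+\frac{\omega}{2}M\ge \frac{1}{10}\gamma_0^2K+\frac{\omega}{2}M=S_\omega(u^{\gamma_0},v^{\gamma_0})\ge S_\omega(\phi_\omega,\psi_\omega)$. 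I checked the computations ($S_\omega-Q=-\frac{1}{2}K+\frac{\omega}{2}M+\frac{3}{2}P$ combined with $P>\frac{2}{5}K$, and the closed form of $S_\omega(u^{\gamma_0},v^{\gamma_0})$) and they are right; the open/closed connectedness argument then correctly yields $Q(u(t),v(t))\le-\delta$ on the whole maximal interval. Your route is longer for this particular lemma but buys two things: it sidesteps the implicit nondegeneracy point in the paper's proof (at $t_1$ one must know $(u(t_1),v(t_1))\ne(0,0)$ to invoke Lemma \ref{lem-cha-gro-sta-5D}, which the paper leaves tacit and which is settled by mass conservation, while you only apply that lemma at rescalings of points with $Q<0$, which are automatically nonzero), and it already delivers the uniform bound $Q\le-\delta$ that the blow-up argument via \eqref{vir-ide-ins} and Glassey's method requires --- albeit with a weaker constant than the paper's Lemma \ref{lem-key-lem}, since for $S_\omega(u,v)<S_\omega(\phi_\omega,\psi_\omega)$ the paper's bound $2(S_\omega(u,v)-S_\omega(\phi_\omega,\psi_\omega))$ is more negative than yours; both suffice for the instability theorem.
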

	
	\begin{proof}
		Let $(u_0,v_0) \in \mathcal{B}_\omega$. We will show that the corresponding solution $(u(t),v(t))$ to \eqref{mas-res-Syst} with initial data $(u(0),v(0)) = (u_0,v_0)$ satisfies $(u(t),v(t)) \in \mathcal{B}_\omega$ for any $t$ in the existence time.  Indeed, by the conservation of mass and energy, we have
		\begin{align} \label{inv-set-pro}
		S_\omega(u(t),v(t)) = S_\omega(u_0,v_0) < S_\omega (\phi_\omega,\psi_\omega)
		\end{align}
		for any $t$ in the existence time. It remains to show that $Q(u(t),v(t))<0$ for any $t$ as long as the solution exists. Suppose that there exists $t_0 >0$ such that $Q(u(t_0),v(t_0)) \geq 0$. By the continuity of the function $t\mapsto Q(u(t),v(t))$, there exists $t_1 \in (0,t_0]$ such that $Q(u(t_1),v(t_1)) =0$. It follows from Lemma $\ref{lem-cha-gro-sta-5D}$ that $S_\omega(u(t_1),v(t_1)) \geq S_\omega(\phi_\omega,\psi_\omega)$ which contradicts to \eqref{inv-set-pro}. The proof is complete.
	\end{proof}
	
	\begin{lemma} \label{lem-key-lem}
		Let $d=5$, $\kappa=\frac{1}{2}$, $\omega>0$ and $(\phi_\omega,\psi_\omega) \in \mathcal{G}_\omega$. If $(u,v) \in \mathcal{B}_\omega$, then
		\[
		Q(u,v) \leq 2 (S_\omega(u,v) - S_\omega(\phi_\omega,\psi_\omega)).
		\]
	\end{lemma}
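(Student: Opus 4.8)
The plan is to exploit the scaling \eqref{scaling} together with the variational characterization of $S_\omega(\phi_\omega,\psi_\omega)$ provided by Lemma \ref{lem-cha-gro-sta-5D}. Writing $K = K(u,v)$, $M = M(u,v)$ and $P = P(u,v)$, I introduce the one-variable function $F(\gamma) := S_\omega(u^\gamma,v^\gamma) = \frac{\gamma^2}{2}K + \frac{\omega}{2}M - \gamma^{5/2}P$, where $(u^\gamma,v^\gamma)$ is as in \eqref{scaling}. A direct differentiation gives $F'(\gamma) = \gamma K - \frac{5}{2}\gamma^{3/2}P$, so that $F'(1) = Q(u,v)$ and, more generally, $Q(u^\gamma,v^\gamma) = \gamma F'(\gamma)$. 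The inequality to be proved is thus equivalent to $2 S_\omega(\phi_\omega,\psi_\omega) \le 2 F(1) - F'(1)$.

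First I would observe that the hypothesis $(u,v) \in \mathcal{B}_\omega$, i.e. $Q(u,v) = K - \frac{5}{2}P < 0$, forces $P > 0$ (since $K \ge 0$); moreover $K > 0$, because a pair with $K = 0$ would have vanishing gradients, hence be constant and therefore trivial in $H^1 \times H^1$, contradicting $(u,v) \ne (0,0)$. Consequently the equation $F'(\gamma) = 0$ has the unique positive root $\gamma_0 = (2K/5P)^2$, and the condition $Q(u,v) < 0$ is precisely the statement $\gamma_0 \in (0,1)$. At this scale $Q(u^{\gamma_0},v^{\gamma_0}) = \gamma_0 F'(\gamma_0) = 0$, so $(u^{\gamma_0},v^{\gamma_0})$ is an admissible nonzero competitor in the infimum of Lemma \ref{lem-cha-gro-sta-5D}, which yields $F(\gamma_0) = S_\omega(u^{\gamma_0},v^{\gamma_0}) \ge S_\omega(\phi_\omega,\psi_\omega)$.

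The heart of the argument is then a monotonicity computation. Setting $G(\gamma) := 2 F(\gamma) - \gamma F'(\gamma)$, the quadratic term cancels and one finds the explicit form $G(\gamma) = \omega M + \frac{1}{2}\gamma^{5/2}P$, whence $G'(\gamma) = \frac{5}{4}P\gamma^{3/2} \ge 0$ because $P > 0$; thus $G$ is nondecreasing on $(0,\infty)$. Evaluating at $\gamma = 1$ gives $G(1) = 2 S_\omega(u,v) - Q(u,v)$, while at $\gamma = \gamma_0$ the relation $F'(\gamma_0) = 0$ gives $G(\gamma_0) = 2 F(\gamma_0)$. Since $\gamma_0 < 1$, monotonicity and the previous step yield $2 S_\omega(u,v) - Q(u,v) = G(1) \ge G(\gamma_0) = 2 F(\gamma_0) \ge 2 S_\omega(\phi_\omega,\psi_\omega)$, which rearranges to the claimed bound.

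I expect no serious obstacle here, since Lemma \ref{lem-cha-gro-sta-5D} carries out the variational work; the only genuinely decisive point is spotting the auxiliary quantity $G(\gamma) = 2F(\gamma) - \gamma F'(\gamma)$, whose purpose is to compare the value of $F$ at the critical scale $\gamma_0$ (bounded below by the ground-state level) with its value at $\gamma = 1$ (where $F$ and $F'$ encode $S_\omega(u,v)$ and $Q(u,v)$). The verifications that $P,K > 0$ and that $\gamma_0 \in (0,1)$ are the routine details to be checked with care.
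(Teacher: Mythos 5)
Your proof is correct and takes essentially the same route as the paper: your auxiliary function $G(\gamma)=2F(\gamma)-\gamma F'(\gamma)$ with $G'(\gamma)=\frac{5}{4}\gamma^{3/2}P\geq 0$ is precisely the paper's differential inequality $(\gamma f'(\gamma))'\leq 2f'(\gamma)$ integrated over $(\gamma_0,1)$, and you use the same critical scale $\gamma_0=\left(\frac{2K}{5P}\right)^2\in(0,1)$ and the same appeal to Lemma \ref{lem-cha-gro-sta-5D} at $Q(u^{\gamma_0},v^{\gamma_0})=0$. Your explicit checks that $P>0$ and $K>0$ are a touch more careful than the paper's, but the argument is otherwise identical.
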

	
	\begin{proof}
		Let $(u,v) \in \mathcal{B}_\omega$. Set 
		\[
		f(\gamma):= S_\omega(u^\gamma,v^\gamma) = \frac{\gamma^2}{2} K(u,v) + \frac{\omega}{2} M(u,v) - \gamma^{\frac{5}{2}} P(u,v).
		\]
		We have
		\[
		f'(\gamma) = \gamma K(u,v) - \frac{5}{2} \gamma^{\frac{3}{2}} P(u,v) = \frac{Q(u^\gamma, v^\gamma)}{\gamma}.
		\]
		We see that
		\begin{align} \label{key-lem-pro}
		(\gamma f'(\gamma))' &= 2\gamma K(u,v) - \frac{25}{4} \gamma^{\frac{3}{2}} P(u,v) \nonumber \\
		&= 2 \left(\gamma K(u,v) - \frac{5}{2} \gamma^{\frac{3}{2}} P(u,v) \right) - \frac{5}{4} \gamma^{\frac{3}{2}} P(u,v) \nonumber \\
		&\leq 2f'(\gamma)
		\end{align}
		for all $\gamma >0$. Note that $P(u,v) \geq 0$ which follows from the fact $Q(u,v) <0$. We also note that since $Q(u,v) <0$, the equation $\partial_\gamma S_\omega(u^\gamma, v^\gamma)=0$ admits a unique non-zero solution 
		\[
		\gamma_0 = \left(\frac{2K(u,v)}{5P(u,v)} \right)^2 \in (0,1),
		\]
		and $Q(u^{\gamma_0},v^{\gamma_0}) = \gamma_0 \times \left.\partial_\gamma S_\omega(u^\gamma,v^\gamma)\right|_{\gamma=\gamma_0} =0$. Taking integration of \eqref{key-lem-pro} over $(\gamma_0,1)$ and using the fact $\gamma f'(\gamma) = Q(u^\gamma,v^\gamma)$, we get
		\[
		Q(u,v) - Q(u^{\gamma_0},v^{\gamma_0}) \leq 2 (S_\omega(u,v) - S_\omega(u^{\gamma_0},v^{\gamma_0})).
		\]
		The result then follows from the fact that $S_\omega(\phi_\omega,\psi_\omega) \leq S_\omega(u^{\gamma_0},v^{\gamma_0})$ since $Q(u^{\gamma_0},v^{\gamma_0}) = 0$.
	\end{proof}
	
	We are now able to prove the strong instability of standing waves given in Theorem $\ref{theo-str-ins}$. 
	
	\noindent \textit{Proof of Theorem $\ref{theo-str-ins}$.} 
	Let $\epsilon>0$. Since $(\phi^{\gamma}_\omega, \psi^{\gamma}_\omega) \rightarrow (\phi_\omega,\psi_\omega)$ as $\gamma \rightarrow 1$, there exists $\gamma_0>1$ such that $\|(\phi^{\gamma_0}_\omega,\psi^{\gamma_0}_\omega) - (\phi_\omega,\psi_\omega)\|_{H^1 \times H^1} <\epsilon$. We claim that $(\phi^{\gamma_0}_\omega, \psi^{\gamma_0}_\omega) \in \mathcal{B}_\omega$. Indeed, we have
	\begin{align*}
	S_\omega(\phi^\gamma_\omega, \psi^\gamma_\omega) &= \frac{\gamma^2}{2} K(\phi_\omega,\psi_\omega) +\frac{\omega}{2} M(\phi_\omega,\psi_\omega) -\gamma^{\frac{5}{2}} P(\phi_\omega,\psi_\omega), \\
	\partial_\gamma S_\omega(\phi^\gamma_\omega, \psi^\gamma_\omega) &= \gamma K(\phi_\omega, \psi_\omega) - \frac{5}{2} \gamma^{\frac{3}{2}} P(\phi_\omega,\psi_\omega) = \frac{Q(\phi^\gamma_\omega, \psi^\gamma_\omega)}{\gamma}.
	\end{align*}
	Since $Q(\phi_\omega,\psi_\omega)=0$, the equation $\partial_\gamma S_\omega(\phi^\gamma_\omega, \psi^\gamma_\omega) =0$ admits a unique non-zero solution 
	\[
	\left( \frac{2K(\phi_\omega,\psi_\omega)}{5P(\phi_\omega,\psi_\omega)}  \right)^2 =1.
	\]
	This implies that $\partial_\gamma S_\omega(\phi^\gamma_\omega, \psi^\gamma_\omega) >0$ if $\gamma \in (0,1)$ and $\partial_\gamma S_\omega(\phi^\gamma_\omega, \psi^\gamma_\omega)<0$ if $\gamma \in (1,\infty)$. In particular, $S_\omega(\phi^\gamma_\omega,\psi^\gamma_\omega)<S_\omega(\phi_\omega,\psi_\omega)$ for any $\gamma>0$ and $\gamma \ne 1$. On the other hand, since $Q(\phi^\gamma_\omega,\psi^\gamma_\omega)= \gamma \partial_\gamma S_\omega(\phi^\gamma_\omega, \psi^\gamma_\omega)$, we see that $Q(\phi^\gamma_\omega, \psi^\gamma_\omega) >0$ if $\gamma \in (0,1)$ and $Q(\phi^\gamma_\omega, \psi^\gamma_\omega)<0$ if $\gamma \in (1,\infty)$. Since $\gamma_0>1$, we see that
	\[
	S_\omega(\phi^{\gamma_0}_\omega, \psi^{\gamma_0}_\omega)< S_\omega(\phi_\omega,\psi_\omega) \text{ and } Q(\phi^{\gamma_0}_\omega,\psi^{\gamma_0}_\omega) <0.
	\]
	Therefore, $(\phi^{\gamma_0}_\omega, \psi^{\gamma_0}_\omega) \in \mathcal{B}_\omega$ and the claim follows.
	
	By the local well-posedness, there exists a unique solution $(u(t), v(t)) \in C([0,T), H^1 \times H^1)$ to \eqref{mas-res-Syst} with initial data $(u(0),v(0)) = (\phi^{\gamma_0}_\omega, \psi^{\gamma_0}_\omega)$, where $T>0$ is the maximal time of existence. By Lemma $\ref{lem-inv-set}$, we see that $(u(t),v(t)) \in \mathcal{B}_\omega$ for any $t\in [0,T)$. Thus, applying Lemma $\ref{lem-key-lem}$, we get 
	\[
	Q(u(t),v(t)) \leq 2 (S_\omega(u(t),v(t)) - S_\omega (\phi_\omega,\psi_\omega)) = 2(S_\omega(\phi^{\gamma_0},\psi^{\gamma_0}) - S_\omega(\phi_\omega, \psi_\omega)) =- \delta
	\]
	for any $t\in [0,T)$, where $\delta= 2 (S_\omega(\phi_\omega, \psi_\omega) - S_\omega(\phi^{\gamma_0}_\omega, \psi^{\gamma_0}_\omega)) >0$. Since $(|x|\phi_\omega, |x|\psi_\omega) \in L^2 \times L^2$, it follows that $(|x| \phi^{\gamma_0}_\omega, |x| \psi^{\gamma_0}_\omega) \in L^2 \times L^2$. Thanks to the virial identity \eqref{vir-ide-ins}, we obtain
	\[
	\frac{d^2}{dt^2} \left( \|xu(t)\|^2_{L^2} + 2 \|xv(t)\|^2_{L^2} \right) = 8 Q(u(t),v(t)) \leq -8 \delta <0,
	\]
	for any $t\in [0,T)$. The classical argument of Glassey \cite{Glassey} implies that the solution blows up in finite time. The proof is complete.
	
	\hfill $\Box$

	\section*{Acknowledgement}
	The author would like to express his deep gratitude to his wife - Uyen Cong for her encouragement and support. He also would like to thank the reviewer for his/her helpful comments and suggestions.

\end{document}